\theoremstyle{plain}
\newtheorem{theorem}{Theorem}
\newtheorem{lemma}[theorem]{Lemma}
\newtheorem{proposition}[theorem]{Proposition}
\newtheorem{corollary}[theorem]{Corollary}
\theoremstyle{definition}
\newtheorem{notation}[theorem]{Notation}
\DeclareMathOperator{\reg}{reg}
\newcommand{\low}{{\mathrm{low}}}
\newcommand{\FI}{{\mathrm{FI}}}
\newcommand{\hd}{{\mathrm{hd}}}
\newcommand{\kk}{{\Bbbk}}
\newcommand{\N}{{\mathrm{N}}}
\newcommand{\sst}{{\sqcup\{\star\}}}
\newcommand{\td}{{\mathrm{td}}}
\newcommand{\ws}{{\widetilde{{S}}}}
\newcommand{\Z}{{\mathbb{Z}}}
\title{A long exact sequence for homology of FI-modules}
\begin{document}

\author{Wee Liang Gan}
\address{Department of Mathematics, University of California, Riverside, CA 92521, USA}
\email{wlgan@math.ucr.edu}

\begin{abstract}
We construct a long exact sequence involving the homology of an FI-module. Using the long exact sequence, we give two methods to bound the Castelnuovo-Mumford regularity of an FI-module which is generated and related in finite degree. We also prove that for an FI-module which is generated and related in finite degree, if it has a nonzero higher homology, then its homological degrees are strictly increasing (starting from the first homological degree).
\end{abstract}

\maketitle

\section{Introduction}

This article studies homological aspects of the theory of FI-modules. We begin by recalling a few definitions from \cite{CE}, \cite{CEF}, and \cite{CEFN}.

Let $\Z_+$ be the set of non-negative integers. Let $\kk$ be a commutative ring. Let $\FI$ be the category whose objects are the finite sets and whose morphisms are the injective maps. An \emph{$\FI$-module} is a functor from $\FI$ to the category of $\kk$-modules. For any $\FI$-module $V$ and finite set $X$, we shall write $V_X$ for $V(X)$.

Suppose $V$ is an $\FI$-module. For any finite set $X$, let $(JV)_X$ be the $\kk$-submodule of $V_X$ spanned by the images of the maps $f_*: V_Y \to V_X$ for all injections $f:Y \to X$ with $|Y|<|X|$. Then $JV$ is an $\FI$-submodule of $V$. Let $$F(V):=V/JV.$$ Then $F$ is a right exact functor from the category of $\FI$-modules to itself. Following \cite{CE} and \cite{CEF}, for any $a\in \Z_+$, the \emph{FI-homology functor} $H_a$ is defined to be the $a$-th left derived functor of $F$. 

Fix a one-element set $\{\star\}$ and define a functor $\sigma: \FI \to \FI$ by $X \mapsto X \sst$. If $f:X\to Y$ is a morphism in $\FI$, then $\sigma(f): X\sst \to Y \sst$ is the map $f \sqcup \mathrm{id}_{\{\star\}}$. Following \cite[Definition 2.8]{CEFN}, the \emph{shift functor} $S$ from the category of $\FI$-modules to itself is defined by $SV = V \circ \sigma$ for every $\FI$-module $V$.

Suppose $V$ is an $\FI$-module. For any finite set $X$, one has $(SV)_X = V_{X\sst}$. There is a natural $\FI$-module homomorphism 
\begin{equation*}
\iota: V \to SV
\end{equation*}
where the maps $\iota_X : V_X \to (SV)_X$ are defined by the inclusion maps $X \hookrightarrow X\sst$. We denote by $DV$ the cokernel of $\iota: V \to SV$. Following \cite[Definition 3.2]{CE}, we call the functor $D: V \mapsto DV$ the \emph{derivative functor} on the category of $\FI$-modules.

Our main result is the following.

\begin{theorem} \label{main theorem}
Let $V$ be an $\FI$-module. Then there is a long exact sequence:
\begin{equation*}
\xymatrix{
& \quad\cdots\cdots\quad \ar[r] & SH_{a+1}(V) \ar[dll] & \\
H_a(V) \ar[r]^{\iota_*} & H_a(SV) \ar[r] & SH_a(V) \ar[dll] & \\
H_{a-1}(V) \ar[r]^{\iota_*} & \quad\cdots\cdots\quad \ar[r] & SH_0(V) \ar[r] & 0.
}
\end{equation*}
\end{theorem}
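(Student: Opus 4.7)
The plan is to realize the claimed long exact sequence as the homology long exact sequence of a short exact sequence of chain complexes arising from a projective resolution of $V$. Choose a projective resolution $P_\bullet \to V$. I will construct a short exact sequence of complexes
\[
0 \to F(P_\bullet) \xrightarrow{F(\iota)} F(SP_\bullet) \to SF(P_\bullet) \to 0,
\]
from which the asserted long exact sequence will follow by taking homology, provided three standard identifications hold: $H_a(F(P_\bullet)) = H_a(V)$ by definition; $H_a(F(SP_\bullet)) = H_a(SV)$ because $S$ is exact and preserves free $\FI$-modules, so $SP_\bullet \to SV$ is again a projective resolution; and $H_a(SF(P_\bullet)) = SH_a(V)$, again by exactness of $S$.

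The core of the argument is therefore a short exact sequence of functors $0 \to F \to FS \to SF \to 0$ on projective $\FI$-modules. For the right-exact piece $F(V) \to F(SV) \to SF(V) \to 0$, which I expect to hold for every $V$, I would apply the right-exact functor $F$ to the defining sequence $V \xrightarrow{\iota} SV \to DV \to 0$ and identify $F(DV) \cong SF(V)$ naturally. This identification reduces to the set-theoretic computation
\[
(JV)_{X\sst} = \iota_X(V_X) + (JSV)_X
\]
inside $(SV)_X = V_{X\sst}$, which I would establish by splitting injections $Z \hookrightarrow X\sst$ according to whether $\star$ is in the image: those missing $\star$ factor through $\iota_X$ and jointly span $\iota_X(V_X)$, while those hitting $\star$ factor through $\sigma$ of an injection $Z' \to X$ with $|Z'| < |X|$ and jointly span $(JSV)_X$.

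To upgrade this right-exact sequence to a short exact sequence on projectives, note that for a free $\FI$-module $M(n) = \kk[\mathrm{Inj}(\{1,\dots,n\},-)]$ the map $\iota$ is visibly injective, and the standard decomposition $SM(n) \cong M(n) \oplus M(n-1)^{\oplus n}$ shows that $DM(n) \cong M(n-1)^{\oplus n}$ is again free. Thus for $V$ free we have a short exact sequence $0 \to V \to SV \to DV \to 0$ of projective $\FI$-modules, and since $H_a$ vanishes on projectives for $a \geq 1$, its long exact sequence collapses to the short exact sequence $0 \to F(V) \to F(SV) \to F(DV) \to 0$. Applying the isomorphism $F(DV) \cong SF(V)$ gives the desired sequence for free modules, and additivity extends it to arbitrary projectives. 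The main obstacle is the combinatorial identification of $(JV)_{X\sst}$; once that is in hand, every remaining step is a formal manipulation with derived functors, exactness of $S$, or the homology long exact sequence of a short exact sequence of complexes.
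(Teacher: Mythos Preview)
Your argument is correct and takes a genuinely different route from the paper. The paper works with the explicit Koszul complex $\ws_{-\bullet}V$ (which computes $H_a(V)$ by Theorem~\ref{koszul complex computes homology}) and proves that $S\ws_{-\bullet}V$ is the mapping cone of $\widetilde\iota\colon \ws_{-\bullet}V \to \ws_{-\bullet}SV$; the long exact sequence is then the mapping-cone sequence. You instead work with an arbitrary projective resolution $P_\bullet$, establish the natural isomorphism $F\circ D \cong S\circ F$ via the set-theoretic identity $(JV)_{X\sst}=\iota_X(V_X)+(JSV)_X$, and use it together with $H_1(DP)=0$ (since $DP$ is projective) to obtain the short exact sequence $0\to FP_\bullet\to FSP_\bullet\to SFP_\bullet\to 0$; exactness of $S$ and the fact that $S$ preserves projectives then identify the homology of these three complexes with $H_\bullet(V)$, $H_\bullet(SV)$, and $SH_\bullet(V)$ respectively. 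Your approach is a bit more functorial and avoids invoking Theorem~\ref{koszul complex computes homology}, while the paper's computation is more explicit and ties the sequence directly to the Koszul complex used elsewhere in the article. One small point worth making explicit in your write-up: the map $F(SP_\bullet)\to SF(P_\bullet)$ you use must be checked to be natural in $V$ (so that it is a map of complexes and the induced map $H_a(V)\to H_a(SV)$ is indeed $\iota_*$), but this follows immediately from your description of the isomorphism $F(DV)\cong SF(V)$.
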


The proof of Theorem \ref{main theorem} will be given in Section \ref{les section}.

As applications of Theorem \ref{main theorem}, we give in Section \ref{C-M reg section} two methods to bound from above the Castelnuovo-Mumford regularity of an $\FI$-module which is generated and related in finite degree. The first method, using the derivative functor $D$, gives a new proof of the bound first found by Church and Ellenberg \cite[Theorem A]{CE}. The second method, using the shift functor $S$, gives a bound which is always less than or equal to one found recently by Li and Ramos \cite[Theorem 5.20]{LRa}. Along the way, we use Theorem \ref{main theorem} to reprove a few results of Li and Yu \cite{LYu}, and Ramos \cite{Ra}. We also prove that for an FI-module which is generated and related in finite degree, if it has a nonzero higher homology, then its homological degrees are strictly increasing (starting from the first homological degree).

Although some of the results in Section \ref{C-M reg section} are known, our proofs based on Theorem \ref{main theorem} seem to be more direct than previous proofs.

\subsection*{Acknowledgment}
I thank Liping Li for useful discussions.

\section{The long exact sequence} \label{les section}

\subsection{A Koszul complex}
Let $V$ be an $\FI$-module. The $\FI$-homology of $V$ can be computed from a Koszul complex $\ws_{-\bullet}V$ first defined in \cite[(11)]{CEFN}. We recall the construction of this complex following \cite[Section 2]{GL}.

For any finite set $I$, let $\kk I$ be the free $\kk$-module with basis $I$, and $\det(I)$ the free $\kk$-module $\bigwedge^{|I|} \kk I$ of rank one; by convention, if $I=\emptyset$, then $\det(I)=\kk$. If $I=\{i_1,\ldots,i_a\}$, then $i_1\wedge\cdots\wedge i_a$ is a basis for $\det(I)$. 

Suppose $X$ is a finite set and $Y$ is a subset of $X$. If $i\in X\setminus Y$ and $v\in V_Y$, we shall write $i(v)$ for the element $f_*(v)\in V_{Y\cup\{i\}}$ where $f:Y \hookrightarrow Y\cup\{i\}$ is the inclusion map. For any $a\in \Z_+$, let
\begin{equation*}
(\ws_{-a} V)_X := \bigoplus_{\substack{I\subset X \\ |I|=a}} V_{X\setminus I} \otimes_{\kk} \det(I).
\end{equation*}
The differential $d : (\ws_{-a} V)_X \to (\ws_{-a+1} V)_X$ is defined on each direct summand by the formula
\begin{equation*}
d( v\otimes i_1 \wedge \cdots \wedge i_a) := \sum_{p=1}^a (-1)^p \, i_p(v) \otimes i_1 \wedge \cdots \widehat{i_p} \cdots \wedge i_a,
\end{equation*}
where $v\in V_{X\setminus I}$, $I=\{i_1,\ldots, i_a\}$, and $\widehat{i_p}$ means that $i_p$ is omitted in the wedge product. 

Suppose $X$ and $X'$ are finite sets and $f:X\to X'$ is an injective map. For any $I\subset X$, the map $f$ restricts to an injective map $f\mid_{X\setminus I} : X\setminus I \to X'\setminus f(I)$. We define 
\begin{equation*}
f_*: (\ws_{-a}V)_X \to (\ws_{-a}V)_{X'}
\end{equation*}
by the formula
\begin{equation*}
f_*( v\otimes i_1 \wedge\cdots\wedge i_a) := (f\mid_{X\setminus I})_*(v) \otimes f(i_1)\wedge \cdots \wedge f(i_a),
\end{equation*}
where $v\in V_{X\setminus I}$, $(f\mid_{X\setminus I})_*(v)\in V_{X'\setminus f(I)}$, and $I=\{i_1,\ldots,i_a\}$. This defines, for each $a\in \Z_+$, an $\FI$-module $\ws_{-a} V$. 

We obtain a complex $\ws_{-\bullet}V$ of $\FI$-modules:
\begin{equation*}
\cdots \longrightarrow \ws_{-2}V \longrightarrow \ws_{-1}V \longrightarrow \ws_0 V \longrightarrow 0.
\end{equation*}
The following theorem was independently proved in \cite{CE} and \cite{GL}.

\begin{theorem} \label{koszul complex computes homology}
Let $V$ be an $\FI$-module. Then there is an $\FI$-module isomorphism 
\begin{equation*}
H_a(V) \cong H_a(\ws_{-\bullet}V) \quad \mbox{ for each } a\in \Z_+.
\end{equation*}
\end{theorem}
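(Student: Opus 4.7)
The plan is to exhibit $\ws_{-\bullet}$ as a functorial acyclic resolution of $F$ on projective FI-modules, and then deduce the isomorphism via a double complex comparison with a projective resolution of $V$. As preliminary observations I would note that for each $a \in \Z_+$ the functor $V \mapsto \ws_{-a}V$ is exact, since $(\ws_{-a}V)_X = \bigoplus_{I\subset X,\,|I|=a} V_{X\setminus I}\otimes_\kk \det(I)$ is a direct sum of evaluations of $V$; and that $H_0(\ws_{-\bullet}V)\cong F(V)$ holds functorially for every $V$, because the differential $d : \ws_{-1}V\to \ws_0 V = V$ sends $v \otimes i$ to $-i(v)$, whose span is precisely $JV$.

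Next I would verify the theorem on a free FI-module $M(W) := \kk[\mathrm{Hom}_{\FI}(W,-)]$. Evaluating at a finite set $X$, a basis element $(v, I)$ with $v : W \hookrightarrow X \setminus I$ corresponds bijectively to a pair $(g, I)$ with $g : W \hookrightarrow X$ and $I \subset X \setminus g(W)$, and the differential preserves $g$. This yields a decomposition
$$
(\ws_{-\bullet}M(W))_X \;\cong\; \bigoplus_{g : W \hookrightarrow X} K_\bullet\bigl(X \setminus g(W)\bigr),
$$
where $K_a(Y) := \bigoplus_{I\subset Y,\,|I|=a}\det(I)$ is (up to an overall sign) the Koszul complex of the linear form $\kk Y \to \kk$ sending every basis vector to $1$. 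Since this form has a unit entry whenever $Y \neq \emptyset$, the complex is acyclic there, while for $Y = \emptyset$ it reduces to $\kk$ in degree $0$. Combined with the direct calculation $F(M(W))_X = \kk\{\text{bijections } W \to X\}$, this shows $\ws_{-\bullet}M(W)$ is a resolution of $F(M(W))$; additivity extends the conclusion to every projective FI-module.

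For a general FI-module $V$, I would take a projective resolution $P_\bullet \twoheadrightarrow V$ and consider the first-quadrant double complex $C_{a,b} := \ws_{-a}(P_b)$. One spectral sequence, using the exactness of each $\ws_{-a}$, has its $E^2$-page concentrated on the row $b = 0$ and equal to $\ws_{-\bullet}V$, so its abutment is $H_a(\ws_{-\bullet}V)$. The other spectral sequence, using the acyclicity on projectives established above, has its $E^2$-page concentrated on the column $a = 0$ and equal to $F(P_\bullet)$, so its abutment is $L_a F(V) = H_a(V)$ by definition. Comparing the two abutments yields the FI-module isomorphism of the theorem.

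The main obstacle is the combinatorial acyclicity of $K_\bullet(Y)$ for nonempty $Y$: one must carefully track the sign conventions introduced by the determinant factors to identify the Koszul differential correctly and to produce an explicit contracting homotopy. Once that combinatorial core is in place, the remaining double-complex argument is purely formal homological algebra.
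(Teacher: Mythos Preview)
Your argument is correct. The paper does not actually prove this theorem in-text; it simply refers the reader to \cite{CE} and \cite{GL}, so there is no detailed argument here to compare against. That said, your strategy---each $\ws_{-a}$ is exact, $H_0(\ws_{-\bullet}V)\cong F(V)$, the complex $\ws_{-\bullet}M(W)$ evaluated at $X$ decomposes over injections $g:W\hookrightarrow X$ into Koszul complexes on $X\setminus g(W)$ (contractible whenever that set is nonempty), and then a first-quadrant double-complex comparison with a projective resolution---is the standard derived-functor comparison and is essentially the proof in \cite{GL}. The only small point worth making explicit in your write-up is that projective $\FI$-modules are retracts of (possibly infinite) direct sums of the $M(W)$, and that $\ws_{-a}$ commutes with such direct sums because $(\ws_{-a}V)_X$ involves only finitely many evaluations of $V$; both hold, so your additivity step is justified.
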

\begin{proof}
See \cite[Proposition 4.9 and proof of Theorem B]{CE}, or \cite[Theorem 1 and Remark 4]{GL}.
\end{proof}

Applying the shift functor $S$ to the complex $\ws_{-\bullet}V$, we obtain a complex $S\ws_{-\bullet}V$. Since $S$ is an exact functor, it is immediate from Theorem \ref{koszul complex computes homology} that one has an isomorphism 
\begin{equation*}
SH_a(V) \cong H_a( S \ws_{-\bullet} V ) \quad \mbox{ for each } a\in \Z_+.
\end{equation*}

\subsection{Proof of Theorem \ref{main theorem}}
Let $V$ be an $\FI$-module. The homomorphism $\iota: V \to SV$ defines, in the obvious way, a morphism of complexes $\widetilde\iota: \ws_{-\bullet}V \to \ws_{-\bullet}SV$. By a standard result in homological algebra \cite[Section 1.5]{We}, Theorem \ref{main theorem} is immediate from Theorem \ref{koszul complex computes homology} and the following lemma.

\begin{lemma}  \label{mapping cone lemma}
Let $V$ be an $\FI$-module. Then the complex $S\ws_{-\bullet}V$ is isomorphic to the mapping cone of $\widetilde\iota: \ws_{-\bullet}V \to \ws_{-\bullet}SV$.
\end{lemma}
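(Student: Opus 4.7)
My plan is to exhibit, for each $a\in\Z_+$, a natural direct sum decomposition of $(S\ws_{-a}V)_X$ into two pieces isomorphic to $(\ws_{-a}SV)_X$ and $(\ws_{-(a-1)}V)_X$, and then to verify that the Koszul differential of $S\ws_{-\bullet}V$, under this decomposition, takes the form of the mapping cone differential of $\widetilde\iota$.

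Unwinding the definitions, $(S\ws_{-a}V)_X = (\ws_{-a}V)_{X\sst}$ is the direct sum of $V_{(X\sst)\setminus I}\otimes\det(I)$ over all subsets $I\subset X\sst$ of size $a$. I split this sum according to whether $\star\in I$ or not. In the first case $I\subset X$ and $V_{(X\sst)\setminus I} = V_{(X\setminus I)\sst} = (SV)_{X\setminus I}$, so the sum of these summands is canonically $(\ws_{-a}SV)_X$. In the second case, write $I = J\sst$ with $J\subset X$ and $|J| = a-1$; then $V_{(X\sst)\setminus I} = V_{X\setminus J}$, and ordering a basis of $\det(I)$ so that $\star$ appears last identifies $\det(J\sst)$ with $\det(J)$ (up to a sign that I will choose below), so the sum of these summands is $(\ws_{-(a-1)}V)_X$. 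The resulting isomorphism is natural in $X$ because $\sigma(f) = f\sqcup \mathrm{id}_{\{\star\}}$ fixes $\star$ and hence preserves the two cases. This reproduces the underlying graded FI-module of $\mathrm{Cone}(\widetilde\iota)$.

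It remains to check that the Koszul differential on $S\ws_{-\bullet}V$ becomes the mapping cone differential, which is $d$ on $\ws_{-\bullet}SV$, $-d$ on the shifted copy of $\ws_{-\bullet}V$, plus the connecting map $\widetilde\iota$. On an element coming from the first case, the Koszul sum stays in the first case and reproduces the Koszul differential of $\ws_{-\bullet}SV$ verbatim. On an element $v\otimes i_1\wedge\cdots\wedge i_{a-1}\wedge\star$ from the second case, the terms indexed by $p<a$ stay in the second case and, after the identification, give the Koszul differential of $\ws_{-\bullet}V$ applied to $v\otimes i_1\wedge\cdots\wedge i_{a-1}$, while the single term with $p=a$ carries the coefficient $(-1)^a$ and involves $\star(v)$. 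The crucial observation is that by the very definition of $\iota$ one has $\star(v) = \iota_{X\setminus J}(v)\in (SV)_{X\setminus J}$, so this last term lands in the first case and is exactly $(-1)^a\widetilde\iota(v\otimes i_1\wedge\cdots\wedge i_{a-1})$.

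The only real obstacle is sign bookkeeping. A direct comparison shows that inserting the scalar $(-1)^a$ into the identification $\det(J\sst)\cong \det(J)$ simultaneously absorbs the $(-1)^a$ on the $p=a$ term into a $+1$ for the connecting map $\widetilde\iota$, and flips the $p<a$ summation into $-d$ on the shifted summand, exactly matching the mapping cone sign convention $d(x,y) = (-dx,\, \widetilde\iota(x)+dy)$. With this choice of sign, $\ws_{-a}SV \oplus \ws_{-(a-1)}V \xrightarrow{\sim} S\ws_{-a}V$ is an isomorphism of complexes of FI-modules, proving the lemma.
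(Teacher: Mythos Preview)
Your proof is correct and follows essentially the same approach as the paper's: both exploit the splitting of the index set $I\subset X\sst$ according to whether $\star\in I$, identifying the two pieces with $\ws_{-a}SV$ and $\ws_{-(a-1)}V$, and then check compatibility with the Koszul differential. The only cosmetic differences are that the paper writes the isomorphism in the other direction and places $\star$ first rather than last in the wedge product (so that no extra sign twist is needed to match Weibel's convention $d(x,y)=(-dx,\,-\widetilde\iota(x)+dy)$ exactly); your sign choice yields the equally valid convention with $+\widetilde\iota$, which is isomorphic via $(x,y)\mapsto(-x,y)$.
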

\begin{proof}
We shall follow standard notations (found, for example, in \cite[Section 1.5]{We}) and write the mapping cone of $\widetilde\iota$ as
\begin{equation*}
\mathrm{cone}(\widetilde\iota) = (\ws_{-\bullet}V)[-1] \oplus \ws_{-\bullet}SV.
\end{equation*} 
To define a homomorphism 
\begin{equation*}
\phi : \mathrm{cone}(\widetilde\iota)  \longrightarrow S\ws_{-\bullet}V,
\end{equation*}
we need to define, for each finite set $X$, a homomorphism of complexes
\begin{equation*}
\phi_X :   \mathrm{cone}(\widetilde\iota)_X  \longrightarrow (S\ws_{-\bullet}V)_X.
\end{equation*}

Suppose $a>0$. The degree $a$ component of $(\ws_{-\bullet}V)_X[-1]$ is $(\ws_{-(a-1)} V)_X$. For any $I=\{i_1,\ldots, i_{a-1}\} \subset X$ and $v\in V_{X\setminus I}$, we have the element $v\otimes i_1\wedge \cdots \wedge i_{a-1} \in (\ws_{-(a-1)} V)_X$; let
\begin{equation*}
\phi_X(v \otimes i_1\wedge\cdots\wedge i_{a-1}) := v \otimes \left( (\star) \wedge i_1\wedge\cdots\wedge i_{a-1} \right) \in (\ws_{-a}V)_{X\sst}.
\end{equation*}
Here, we used $X\setminus I = (X\sst)\setminus (I\sst)$ to see that the element $v$ on the right hand side is an element of $V_{(X\sst) \setminus (I\sst)}$.

Suppose $a\geqslant 0$. For any $I=\{i_1,\ldots, i_a\} \subset X$ and $v\in (SV)_{X\setminus I}$, we have the element $v\otimes i_1\wedge \cdots \wedge i_a \in (\ws_{-a}SV)_X$; let
\begin{equation*}
\phi_X(v \otimes i_1\wedge\cdots\wedge i_a) := v \otimes i_1\wedge\cdots\wedge i_a \in (\ws_{-a}V)_{X\sst}.
\end{equation*}
Here, we used $(SV)_{X\setminus I} = V_{(X\sst) \setminus I}$ to see that the element $v$ on the right hand side is an element of $V_{(X\sst) \setminus I}$.

By a routine verification, the above defines a homomorphism $\phi$ of complexes of $\FI$-modules. It is plain that $\phi$ is bijective and hence an isomorphism.
\end{proof}

A special case of Lemma \ref{mapping cone lemma} appeared in \cite[proof of Proposition 6]{GL}.

\section{Applications} \label{C-M reg section}

\subsection{Definitions and notations}
We recall some definitions from \cite{CE} and \cite{Li1}.

Let $V$ be any $\FI$-module. For any $n\in \Z_+$, we set $\mathbf{n} :=\{1,\ldots,n\}$ (in particular, $\mathbf{0}=\emptyset$). We shall use the convention that the supremum and infimum of an empty set are $-\infty$ and $\infty$, respectively.

The \emph{degree} $\deg(V)$ of $V$ is 
\begin{equation*}
\deg(V) := \sup\{ n\in\Z_+ \mid V_{\mathbf{n}} \neq 0 \}.
\end{equation*}

The \emph{lowest degree} $\low(V)$ of $V$ is
\begin{equation*}
\low(V) := \inf\{ n\in\Z_+ \mid V_{\mathbf{n}} \neq 0 \}.
\end{equation*}

For any $a\in \Z_+$, the \emph{$a$-th homological degree} $\hd_a(V)$ of $V$ is
\begin{equation*}
\hd_a(V) := \deg H_a(V).
\end{equation*}

The \emph{Castelnuovo-Mumford regularity} $\reg(V)$ of $V$ is the infimum of the set of all $c\in\Z$ such that
\begin{equation*}
\hd_a(V) \leqslant c+a \quad \mbox{ for every integer } a\geqslant 1. 
\end{equation*}

The \emph{torsion degree} $\td(V)$ of $V$ is the supremum of the set of all $n\in\Z_+$ such that there exists a nonzero $v\in V_\mathbf{n}$ satisfying $f_*(v)=0$ for every injection $f:\mathbf{n} \to \mathbf{n+1}$.

For any $k\in \Z_+$, we say that $V$ is \emph{generated in degree $\leqslant k$} if $\hd_0(V)\leqslant k$.

For any $k, d\in \Z_+$, we say that $V$ is \emph{generated in degree $\leqslant k$ and related in degree $\leqslant d$} if there exists a short exact sequence 
\begin{equation*}
0 \longrightarrow W \longrightarrow P \longrightarrow V \longrightarrow 0
\end{equation*}
where $P$ is a projective $\FI$-module generated in degree $\leqslant k$ and $W$ is an $\FI$-module generated in degree $\leqslant d$. 

Let $KV$ be the kernel of $\iota: V\to SV$.

Let $H_1^{D}$ be the first left-derived functor of the right exact functor $D$.

\subsection{Some basic facts}
We collect in the following lemma some basic facts which we shall use later.

\begin{lemma} \label{lemma basic facts}
Let $V$ be an $\FI$-module. Then one has the followings.

\begin{itemize}
\item[(i)] There is an isomorphism $KV \cong H_0(KV)$, and $\td(V)=\deg(KV)$.

\item[(ii)] There is an isomorphism $KV \cong H_1^D(V)$.

\item[(iii)] If $P$ is a projective $\FI$-module, then $DP$ is a projective $\FI$-module.

\item[(iv)] If $V$ is generated in degree $\leqslant k$ where $k\in \Z_+$, then $DV$ is generated in degree $\leqslant k-1$.

\item[(v)] If $V$ is generated in degree $\leqslant k$ and related in degree $\leqslant d$ where $k, d\in \Z_+$, then $DV$ is generated in degree $\leqslant k-1$ and related in degree $\leqslant d-1$.

\item[(vi)]  If $V$ is generated in degree $\leqslant k$ and related in degree $\leqslant d$ where $k, d\in \Z_+$, then $\hd_1(V)\leqslant d$.

\item[(vii)] There is an isomorphism $S(DV)\cong D(SV)$.

\item[(viii)] If $P$ is a projective $\FI$-module, then $SP$ is a projective $\FI$-module.

\item[(ix)] If $V$ is generated in degree $\leqslant k$, then $SV$ is generated in degree $\leqslant k$.

\item[(x)] If $V$ is generated in degree $\leqslant k$ and related in degree $\leqslant d$ where $k, d\in \Z_+$, then $SV$ is generated in degree $\leqslant k$ and related in degree $\leqslant d$.

\end{itemize}
\end{lemma}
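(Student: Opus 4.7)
The plan is to handle the ten parts in three clusters: (a) direct computations on the projective generators $M(k)_X = \kk[\mathrm{Inj}(\mathbf{k},X)]$, which then propagate by exact or right-exact functoriality; (b) torsion-theoretic unpackings of the kernel $KV$ for items (i) and (ii); and (c) a naturality check for item (vii). The backbone of cluster (a) is to compute $(SM(k))_X = \kk[\mathrm{Inj}(\mathbf{k}, X\sqcup\{\star\})]$ by splitting injections according to whether $\star$ lies in the image: injections missing $\star$ recover $\iota_*(M(k))$, while injections sending slot $i \in \mathbf{k}$ to $\star$ give, naturally in $X$, a copy of $M(k-1)$. This yields $SM(k) \cong M(k) \oplus M(k-1)^{\oplus k}$ and $DM(k) \cong M(k-1)^{\oplus k}$, establishing (iii) and (viii). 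Items (iv), (ix), and (x) then follow from these by applying, respectively, right-exact $D$ or exact $S$ to a projective presentation of $V$. For (vi), the long exact sequence in FI-homology associated to $0 \to W \to P \to V \to 0$ reduces, via $H_1(P) = 0$, to an injection $H_1(V) \hookrightarrow H_0(W)$, and $\deg H_0(W) \leqslant d$ because $W$ is generated in degree $\leqslant d$.

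For (i), I would observe that any injection $g: Y \to X$ with $|Y| < |X|$ factors as $Y \hookrightarrow Y \sqcup \{\star\} \to X$ by extending $g$ to the new element (possible since $|Y|<|X|$), so $g_*$ kills $(KV)_Y$. This forces $J(KV) = 0$, hence $KV = H_0(KV)$. The identity $\td(V) = \deg(KV)$ then follows because every injection $f: \mathbf{n} \to \mathbf{n+1}$ is the standard inclusion postcomposed with a bijection, so $v \in (KV)_\mathbf{n}$ iff $f_*(v) = 0$ for every such $f$. For (ii), I first note that $KP = 0$ for any projective $P$, since $\iota$ on $M(k)$ sends a basis to a subset of a basis and hence is injective. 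Given a projective resolution $P_\bullet \to V$, we then have a short exact sequence of complexes $0 \to P_\bullet \to SP_\bullet \to DP_\bullet \to 0$, and its long exact sequence in homology — using exactness of $S$ to kill the positive homology of $SP_\bullet$ — yields $0 \to H_1^D(V) \to V \xrightarrow{\iota} SV \to DV \to 0$, identifying $H_1^D(V) \cong KV$.

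For (v), applying the derived functors of $D$ to $0 \to W \to P \to V \to 0$ produces the exact sequence $0 \to KV \to DW \to DP \to DV \to 0$ (since $L_1 D(P) = KP = 0$), so $\ker(DP \to DV)$ is a quotient of $DW$ and is generated in degree $\leqslant d-1$ by (iv). For (vii), I would unpack $S\iota_V$ and $\iota_{SV}$ at a finite set $X$ and observe that both are $V$ applied to the canonical inclusion adding a second $\star$ to $X \sqcup \{\star\}$; hence the two natural transformations $SV \to S^2V$ coincide, and $S(DV) = \mathrm{coker}(S\iota_V) = \mathrm{coker}(\iota_{SV}) = D(SV)$, using exactness of $S$ to move the cokernel past $S$.

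The main obstacle is the FI-equivariant identification $DM(k) \cong M(k-1)^{\oplus k}$ used throughout cluster (a): the decomposition above is clear pointwise, but one must verify that postcomposition by an injection of the underlying set $X$ preserves the slot of $\mathbf{k}$ that was sent to $\star$, so that the decomposition is natural as FI-modules rather than only as $\kk$-module sequences. Once that bookkeeping is in place, everything else reduces to a routine application of derived functors and long exact sequences.
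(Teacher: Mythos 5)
Your proposal is correct, and it is in fact more self-contained than the paper, which proves almost nothing here: items (ii), (iii), (iv), (vii), (viii), (ix) are simply cited to \cite{CE}, \cite{CEFN}, and \cite{Ra}, with (i) and (vi) declared trivial and (v), (x) deduced formally. Your arguments are essentially the standard ones from those references: the decomposition $SM(k)\cong M(k)\oplus M(k-1)^{\oplus k}$ (hence $DM(k)\cong M(k-1)^{\oplus k}$) is exactly how \cite{CEFN} and \cite{CE} establish (iii), (iv), (viii), (ix), and your derivation of (ii) from the termwise-split exact sequence of complexes $0\to P_\bullet\to SP_\bullet\to DP_\bullet\to 0$ is the argument of \cite[Lemma 3.6]{CE}. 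Your factorization argument for (i) and the injection $H_1(V)\hookrightarrow H_0(W)$ for (vi) are the intended "trivial" proofs, and you correctly identify and discharge the one genuine bookkeeping issue, namely the FI-naturality of the splitting of $\mathrm{Inj}(\mathbf{k},X\sqcup\{\star\})$. The only imprecision is in (vii): the two maps $S\iota_V$ and $\iota_{SV}$ from $SV$ to $S^2V$ do not literally coincide --- one is induced by the inclusion of $X\sqcup\{\star\}$ missing the newly added point, the other by $\sigma$ applied to $X\hookrightarrow X\sqcup\{\star\}$, which misses the previously added point --- but they differ by the natural automorphism of $\sigma^2$ swapping the two added elements, which therefore induces the asserted isomorphism of cokernels (exactness of $S$ being used, as you say, to identify $S(\mathrm{coker}\,\iota_V)$ with $\mathrm{coker}(S\iota_V)$). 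For (v), note also that plain right-exactness of $D$ already suffices: $\ker(DP\to DV)$ is the image of $DW\to DP$, so the four-term sequence with $KV$, while correct, is not needed.
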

\begin{proof}
(i) Trivial.

(ii) See \cite[Lemma 3.6(i)]{CE}.

(iii) See \cite[Lemma 3.6(iv)]{CE}.

(iv) See \cite[proof of Proposition 3.5]{CE}.

(v) Follows from (iii) and (iv).

(vi) Trivial.

(vii) See \cite[Lemma 3.5]{Ra}.

(viii) Follows from \cite[Proposition 2.12]{CEFN}.

(ix) See \cite[Corollary 2.13]{CEFN}.

(x) Follows from (viii) and (ix).
\end{proof}

The following simple observation is sometimes useful.

\begin{lemma} \label{lemma on lowest degree}
Let $V$ be an $\FI$-module and let $a\in\Z_+$. If $n<\low(V)+a$, then $H_a(V)_{\mathbf{n}} = 0$.
\end{lemma}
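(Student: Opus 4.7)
The plan is to invoke Theorem \ref{koszul complex computes homology} to replace $H_a(V)$ with $H_a(\widetilde{S}_{-\bullet}V)$, and then show that the $\mathbf{n}$-component of the Koszul complex itself vanishes in homological degree $a$ under the stated hypothesis. Once the term is zero, its homology is automatically zero.

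Concretely, I would evaluate the formula
\[
(\ws_{-a}V)_{\mathbf{n}} = \bigoplus_{\substack{I\subset \mathbf{n} \\ |I|=a}} V_{\mathbf{n}\setminus I} \otimes_{\kk} \det(I),
\]
and observe that each summand has $|\mathbf{n}\setminus I| = n-a$. The hypothesis $n<\low(V)+a$ gives $n-a<\low(V)$. Two cases arise: if $n-a<0$, then there are no subsets $I\subset \mathbf{n}$ with $|I|=a$, so the direct sum is empty. Otherwise $n-a\in \Z_+$ with $n-a<\low(V)$, so by the definition of $\low(V)$ we have $V_{\mathbf{n-a}}=0$; since $V$ is a functor and any bijection between finite sets of the same cardinality induces an isomorphism, this forces $V_{\mathbf{n}\setminus I}=0$ for every admissible $I$. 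Either way, $(\ws_{-a}V)_{\mathbf{n}}=0$.

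Consequently $H_a(\ws_{-\bullet}V)_{\mathbf{n}}=0$, and Theorem \ref{koszul complex computes homology} gives $H_a(V)_{\mathbf{n}}=0$, as desired.

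There is no substantive obstacle here: the only minor point to verify carefully is that the vanishing of $V_{\mathbf{n-a}}$ transfers to $V_{\mathbf{n}\setminus I}$ via functoriality of $V$, and that the $a>n$ case is handled by the direct sum being indexed over an empty family. The whole argument is a one-step degree count using the explicit form of the Koszul complex.
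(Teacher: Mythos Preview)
Your proposal is correct and follows exactly the paper's approach: the paper's proof is a one-line observation that $(\ws_{-a}V)_{\mathbf{n}}=0$ when $n<\low(V)+a$, followed by an appeal to Theorem~\ref{koszul complex computes homology}. You have simply spelled out the degree count and the two sub-cases that make this vanishing evident.
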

\begin{proof}
If $n<\low(V)+a$, then $(\ws_{-a}V)_{\mathbf{n}}=0$; hence, by Theorem \ref{koszul complex computes homology}, one has $H_a(V)_{\mathbf{n}} = 0$.
\end{proof}

\begin{corollary} \label{lower bound of hd}
Let $V$ be an $\FI$-module and let $a\in\Z_+$. If $H_a(V)\neq 0$, then 
\begin{equation*}
\hd_a(V)\geqslant \low(V)+a.
\end{equation*}
\end{corollary}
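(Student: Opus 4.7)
The plan is to derive the corollary directly as the contrapositive of Lemma \ref{lemma on lowest degree}. The hypothesis $H_a(V)\neq 0$ means there exists some $n\in\Z_+$ with $H_a(V)_{\mathbf{n}}\neq 0$. By Lemma \ref{lemma on lowest degree}, any such $n$ must satisfy $n\geqslant \low(V)+a$, since otherwise $H_a(V)_{\mathbf{n}}$ would vanish. Taking the supremum over all such $n$ then yields
\begin{equation*}
\hd_a(V) \;=\; \deg H_a(V) \;=\; \sup\{n\in\Z_+ \mid H_a(V)_{\mathbf{n}}\neq 0\} \;\geqslant\; \low(V)+a,
\end{equation*}
which is exactly the desired inequality.

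One small sanity check is needed regarding the convention on $\pm\infty$: the right-hand side $\low(V)+a$ must make sense. Since $H_a(V)\neq 0$ forces $V\neq 0$, we have $\low(V)\in\Z_+$ (the infimum of a nonempty subset of $\Z_+$), so $\low(V)+a$ is a finite nonnegative integer and the inequality is literal. The set whose supremum defines $\hd_a(V)$ is nonempty by hypothesis, so $\hd_a(V)$ is either a nonnegative integer or $+\infty$, and in either case the stated bound holds.

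There is no real obstacle here; the work has been done in Lemma \ref{lemma on lowest degree}, which in turn used only the vanishing of $(\ws_{-a}V)_{\mathbf{n}}$ for $n<\low(V)+a$ together with the identification of FI-homology as the homology of the Koszul complex (Theorem \ref{koszul complex computes homology}). The corollary is simply the packaging of that vanishing statement as a lower bound on the top nonvanishing degree.
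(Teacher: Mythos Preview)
Your proof is correct and follows exactly the paper's approach: the paper simply says ``Immediate from Lemma \ref{lemma on lowest degree},'' and you have spelled out that immediacy in detail. Your sanity check about $\low(V)$ being finite (since $H_a(V)\neq 0$ forces $V\neq 0$) is a nice touch, though not strictly necessary for the argument.
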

\begin{proof}
Immediate from Lemma \ref{lemma on lowest degree}.
\end{proof}

\subsection{FI-modules of finite degree}
The following result was independently proved by Li \cite[Theorem 4.8]{Li1} and Ramos \cite[Corollary 3.11]{Ra}. Let us give a proof using Theorem \ref{koszul complex computes homology}.

\begin{lemma} \label{reg of torsion module}
Let $V$ be an $\FI$-module with $\deg(V)<\infty$. Then $\reg(V) \leqslant \deg(V)$.
\end{lemma}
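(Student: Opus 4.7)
The plan is to read off the bound directly from the Koszul complex $\widetilde{S}_{-\bullet}V$ computing FI-homology (Theorem \ref{koszul complex computes homology}). The key observation is a degree count: the term of $\widetilde{S}_{-a}V$ evaluated at $\mathbf{n}$ is
\begin{equation*}
(\widetilde{S}_{-a}V)_{\mathbf{n}} = \bigoplus_{\substack{I\subset \mathbf{n}\\ |I|=a}} V_{\mathbf{n}\setminus I} \otimes_{\kk} \det(I),
\end{equation*}
and each summand $V_{\mathbf{n}\setminus I}$ has underlying set of cardinality $n-a$. If $n - a > \deg(V)$, then $V_{\mathbf{n}\setminus I} = 0$ for every $I$ of size $a$, so $(\widetilde{S}_{-a}V)_{\mathbf{n}} = 0$.

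Consequently, whenever $n > \deg(V) + a$, we have $(\widetilde{S}_{-a}V)_{\mathbf{n}} = 0$, and Theorem \ref{koszul complex computes homology} yields $H_a(V)_{\mathbf{n}} = 0$. Therefore $\hd_a(V) = \deg H_a(V) \leqslant \deg(V) + a$ for every $a \in \Z_+$, and in particular for every $a \geqslant 1$. By the definition of Castelnuovo-Mumford regularity, this means $c = \deg(V)$ lies in the set whose infimum is $\reg(V)$, so $\reg(V) \leqslant \deg(V)$.

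There is really no obstacle here: the entire argument is a single degree count on the Koszul complex, relying only on the already-established Theorem \ref{koszul complex computes homology}. One only needs to be careful that the conclusion quantifies over $a \geqslant 1$ (as required by the definition of $\reg$), which is immediate since the bound $\hd_a(V) \leqslant \deg(V) + a$ holds uniformly in $a$.
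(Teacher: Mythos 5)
Your argument is correct and is essentially identical to the paper's own proof: both observe that $(\ws_{-a}V)_{\mathbf{n}}=0$ once $n>\deg(V)+a$ and then invoke Theorem \ref{koszul complex computes homology} to conclude $H_a(V)_{\mathbf{n}}=0$, hence $\hd_a(V)\leqslant \deg(V)+a$. You simply spell out the degree count on the summands $V_{\mathbf{n}\setminus I}$ a bit more explicitly than the paper does.
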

\begin{proof}
Let $a\in \Z_+$. If $n>\deg(V)+a$, then $(\ws_{-a}V)_{\mathbf{n}}=0$; hence, by Theorem \ref{koszul complex computes homology}, one has $H_a(V)_{\mathbf{n}}=0$.
\end{proof}

\subsection{Bounding regularity using the derivative functor}
Our first strategy for bounding the Castelnuovo-Mumford regularity $\reg(V)$ of an $\FI$-module $V$ is to find a bound of $\reg(V)$ in terms of $\reg(DV)$, and then use recurrence to obtain a bound for $\reg(V)$.

\begin{proposition} \label{proposition on bound of reg by D}
Let $V$ be an $\FI$-module. Then one has:
\begin{equation*}
\reg(V) \leqslant \max\{ \hd_1(V)-1,\, \td(V),\, \reg(DV)+1 \}.
\end{equation*}
\end{proposition}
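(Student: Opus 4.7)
The plan is to prove $\hd_a(V) \leqslant c + a$ for every $a \geqslant 1$ by induction on $a$, where $c = \max\{\hd_1(V)-1,\, \td(V),\, \reg(DV)+1\}$ denotes the right-hand side of the asserted inequality. The base case $a=1$ is immediate from $c \geqslant \hd_1(V) - 1$. For the inductive step with $a \geqslant 2$, I would extract from the long exact sequence of Theorem \ref{main theorem} the three-term piece
\[H_a(SV) \longrightarrow SH_a(V) \longrightarrow H_{a-1}(V),\]
which, together with the preceding piece $H_a(V) \xrightarrow{\iota_*} H_a(SV) \to SH_a(V)$, exhibits $SH_a(V)$ as an extension whose submodule equals $\mathrm{coker}(\iota_*)$ and whose quotient embeds in $H_{a-1}(V)$. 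Since $\deg SH_a(V) = \hd_a(V) - 1$ whenever $\hd_a(V) \geqslant 1$, the inductive step reduces to bounding $\deg \mathrm{coker}(\iota_*)$ and $\hd_{a-1}(V)$ by $c + a - 1$; the latter is exactly the inductive hypothesis.

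To bound $\mathrm{coker}(\iota_*)$, I would factor $\iota \colon V \to SV$ as $V \twoheadrightarrow V/KV \hookrightarrow SV$, arising from the short exact sequences $0 \to KV \to V \to V/KV \to 0$ and $0 \to V/KV \to SV \to DV \to 0$, so that $\iota_* = g \circ f$ with $f \colon H_a(V) \to H_a(V/KV)$ and $g \colon H_a(V/KV) \to H_a(SV)$. A short diagram chase then yields a short exact sequence
\[0 \to Q \to \mathrm{coker}(\iota_*) \to \mathrm{coker}(g) \to 0,\]
in which $Q$ is a quotient of $\mathrm{coker}(f)$. The connecting-homomorphism portions of the two associated long exact sequences of FI-homology give $\mathrm{coker}(f) \hookrightarrow H_{a-1}(KV)$ and $\mathrm{coker}(g) \hookrightarrow H_a(DV)$, hence $\deg \mathrm{coker}(\iota_*) \leqslant \max\{\hd_{a-1}(KV),\, \hd_a(DV)\}$.

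To finish, Lemmas \ref{lemma basic facts}(i) and \ref{reg of torsion module} give $\reg(KV) \leqslant \td(V)$, so $\hd_{a-1}(KV) \leqslant \td(V) + a - 1 \leqslant c + a - 1$, while $\hd_a(DV) \leqslant \reg(DV) + a \leqslant c + a - 1$. Combining these with the inductive hypothesis yields $\deg SH_a(V) \leqslant c + a - 1$, and therefore $\hd_a(V) \leqslant c + a$. I expect the main obstacle to be locating the right filtration on $\mathrm{coker}(\iota_*)$ via the factorization through $V/KV$; this is what allows the long exact sequence of Theorem \ref{main theorem} to interact cleanly with both the torsion bound and the derivative bound, rather than producing the obviously circular estimate one obtains by naively combining $\hd_a(V) \leqslant \hd_a(SV) + 1$ with $\hd_a(SV) \leqslant \max\{\hd_a(V/KV), \hd_a(DV)\}$.
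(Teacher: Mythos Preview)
Your proof is correct and follows essentially the same approach as the paper's. Both arguments induct on $a$, factor $\iota$ through $V/KV$, use the two long exact sequences attached to $0\to KV\to V\to V/KV\to 0$ and $0\to V/KV\to SV\to DV\to 0$ together with Lemma~\ref{reg of torsion module} to show that $\iota_*\colon H_a(V)\to H_a(SV)$ is surjective in degrees $\geqslant c+a$ (equivalently, that $\deg\mathrm{coker}(\iota_*)\leqslant c+a-1$), and then feed this into the long exact sequence of Theorem~\ref{main theorem}. The only difference is cosmetic: the paper works pointwise at each $\mathbf{n}$, while you phrase the same vanishing as a degree bound on $\mathrm{coker}(\iota_*)$ via the filtration $0\to Q\to\mathrm{coker}(\iota_*)\to\mathrm{coker}(g)\to 0$.
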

\begin{proof}
Set $c = \max\{ \hd_1(V)-1,\, \td(V),\, \reg(DV)+1 \}$. There is nothing to prove if $c=\infty$, so assume $c<\infty$. We need to prove that
\begin{equation} \label{bound of hd by D}
\hd_a(V) \leqslant c + a \quad \mbox{ for each } a\geqslant 1.
\end{equation}
When $a=1$, the inequality (\ref{bound of hd by D}) holds because $\hd_1(V)-1\leqslant c$.

Suppose, for induction on $a$, that one has $\hd_{a-1}(V) \leqslant c+a-1$ for some $a\geqslant 2$. Then 
\begin{equation*}
H_{a-1}(V)_{\mathbf{n}} = 0 \quad \mbox{ for each } n\geqslant c+a.
\end{equation*}

We have two short exact sequences:
\begin{gather*}
0 \longrightarrow KV \longrightarrow V \stackrel{\iota_1}{\longrightarrow} V/KV \longrightarrow 0, \\
0 \longrightarrow V/KV \stackrel{\iota_2}{\longrightarrow} SV \longrightarrow DV \longrightarrow 0. 
\end{gather*}
They give two long exact sequences:
\begin{gather*}
\cdots \longrightarrow H_a(V)_{\mathbf{n}} \stackrel{\iota_{1*}}{\longrightarrow} H_a(V/KV)_{\mathbf{n}} \longrightarrow H_{a-1}(KV)_{\mathbf{n}} \longrightarrow \cdots,\\
\cdots \longrightarrow H_a(V/KV)_{\mathbf{n}} \stackrel{\iota_{2*}}{\longrightarrow} H_a(SV)_{\mathbf{n}} \longrightarrow H_a(DV)_{\mathbf{n}} \longrightarrow \cdots.
\end{gather*}
Recall that $\deg(KV)=\td(V)$ (see Lemma \ref{lemma basic facts}(i)). By Lemma \ref{reg of torsion module} and the inequality $c\geqslant \td(V)$, one has 
\begin{equation*}
H_{a-1}(KV)_{\mathbf{n}}=0 \quad \mbox{ for each } n\geqslant c+a. 
\end{equation*}
By the inequality $c\geqslant \reg(DV)+1$, one has 
\begin{equation*}
H_a(DV)_{\mathbf{n}}=0 \quad \mbox{ for each } n\geqslant c+a. 
\end{equation*}
Since $\iota: V\to SV$ is the composition of $\iota_1: V\to V/K$ and $\iota_2: V/K\to SV$, the map $\iota_*: H_a(V)_{\mathbf{n}} \to H_a(SV)_{\mathbf{n}}$ is surjective for each $n\geqslant c+a$.

From Theorem \ref{main theorem}, we have an exact sequence:
\begin{equation*}
\cdots  \longrightarrow  H_a(V)_{\mathbf{n}} \stackrel{\iota_{*}}{\longrightarrow} H_a(SV)_{\mathbf{n}} \longrightarrow H_a(V)_{\mathbf{n+1}} \longrightarrow H_{a-1}(V)_{\mathbf{n}} \longrightarrow \cdots.
\end{equation*} 
It follows that $H_a(V)_{\mathbf{n+1}}=0$ for $n\geqslant c+a$, and hence $\hd_a(V)\leqslant c+a$.
\end{proof}

Finiteness of the Castelnuovo-Mumford regularity for finitely generated $\FI$-modules over a field of characteristic zero was first proved by Sam and Snowden in \cite[Corollary 6.3.5]{SS}. In the following theorem, the inequalities (\ref{bound on td}) and (\ref{church-ellenberg bound}) were first proved by Church and Ellenberg in \cite[Theorem 3.8 and Theorem 3.9]{CE} via an intricate combinatorial result \cite[Theorem D]{CE}. An alternative proof of (\ref{bound on td}) and (\ref{church-ellenberg bound}) was subsequently given by Li in \cite[Theorem 2.4]{Li3} using results from \cite{Li1} and \cite{LYu}. (Although the papers \cite{Li1}, \cite{Li3}, and \cite{LYu} worked with finitely generated $\FI$-modules over a noetherian ring, most of the arguments in there can be adapted to our more general setting.) The proof of (\ref{bound on td}) we give below follows along the same lines as the argument in \cite{Li3}, and we use the crucial idea in \cite{Li3} of proving the inequalities (\ref{bound on td}) and (\ref{church-ellenberg bound}) simultaneously by induction on $k$. However, our proof of (\ref{church-ellenberg bound}) via (\ref{1st bound}) and (\ref{1st bound vs church-ellenberg bound}) is quite different from the proofs in \cite{CE} and \cite{Li3}.

\begin{theorem} \label{theorem on bound of reg by D}
Let $V$ be an $\FI$-module which is generated in degree $\leqslant k$ and related in degree $\leqslant d$ where $k, d\in \Z_+$. Let
\begin{align*}
\hd_1^D(V) & := \max\{ \hd_1(D^i V)+i \mid i=0,1,\ldots,k \},\\
\td^D(V) &  := \max\{ \td(D^i V)+i \mid i=0,1,\ldots,k \}.
\end{align*}
Then one has the followings:
\begin{align}
\td(V) & \leqslant \min\{k,d\}+d-1, \label{bound on td}\\
\reg(V) & \leqslant \max\{ \hd_1^D(V)-1,\, \td^D(V) \}, \label{1st bound}\\
\max\{ \hd_1^D(V)-1,\, \td^D(V) \} & \leqslant \min\{k,d\}+d-1. \label{1st bound vs church-ellenberg bound}
\end{align}
In particular, one has:
\begin{equation}
\reg(V) \leqslant \min\{k,d\}+d-1. \label{church-ellenberg bound}
\end{equation}
\end{theorem}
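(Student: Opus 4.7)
The plan is to prove (\ref{bound on td}), (\ref{1st bound}), and (\ref{1st bound vs church-ellenberg bound}) simultaneously by induction on $k$; (\ref{church-ellenberg bound}) then follows by combining the last two. In the base case $k = 0$, Lemma \ref{lemma basic facts}(iv) gives $DV = 0$, so $\hd_1^D(V) = \hd_1(V)$ and $\td^D(V) = \td(V)$, and all three inequalities reduce to easy verifications (using Proposition \ref{proposition on bound of reg by D} and the argument below). For the induction step, the IH applies to $DV$ since, by Lemma \ref{lemma basic facts}(v), it has parameters $(k-1, d-1)$. Inequality (\ref{1st bound}) is the most straightforward part: Proposition \ref{proposition on bound of reg by D} combined with the IH bound $\reg(DV) \leqslant \max\{\hd_1^D(DV) - 1,\, \td^D(DV)\}$ and the reindexing $j = i + 1$ in the defining maxes (yielding $\hd_1^D(DV) + 1 \leqslant \hd_1^D(V)$ and similarly for $\td^D$) delivers it directly.

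The main obstacle, and the heart of the argument, is (\ref{bound on td}). Applying the snake lemma to the commutative diagram formed by $0 \to W \to P \to V \to 0$ and its shift via $\iota$, and using that $KP = 0$ for projective $P$ (which holds since $K \cong H_1^D$ by Lemma \ref{lemma basic facts}(ii) and derived functors vanish on projectives), one obtains the four-term exact sequence
\begin{equation*}
0 \longrightarrow KV \longrightarrow DW \longrightarrow DP \longrightarrow DV \longrightarrow 0.
\end{equation*}
I would split this into $0 \to KV \to DW \to M \to 0$ and $0 \to M \to DP \to DV \to 0$ with $M := DW/KV$. Because $DP$ is projective by Lemma \ref{lemma basic facts}(iii), the long exact sequence in FI-homology applied to the second SES gives $H_1(M) \cong H_2(DV)$. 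The long exact sequence applied to the first SES yields the exact fragment $H_1(M) \to H_0(KV) \to H_0(DW)$, and Lemma \ref{lemma basic facts}(i) identifies $\td(V) = \hd_0(KV)$; hence
\begin{equation*}
\td(V) \;\leqslant\; \max\{\hd_2(DV),\, \hd_0(DW)\}.
\end{equation*}
Lemma \ref{lemma basic facts}(iv) bounds $\hd_0(DW) \leqslant d - 1$, and the IH (applied to $DV$, for $d \geqslant 1$) gives $\reg(DV) \leqslant \min\{k,d\} + d - 3$, hence $\hd_2(DV) \leqslant \min\{k,d\} + d - 1$. This yields the required bound; the edge case $d = 0$ is handled by $DW = 0$, which forces $KV = 0$.

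Finally, (\ref{1st bound vs church-ellenberg bound}) is obtained term by term. Iterating Lemma \ref{lemma basic facts}(v) shows $D^i V$ has parameters $(k-i, d-i)$ for $i \leqslant \min\{k,d\}$; Lemma \ref{lemma basic facts}(vi) then gives $\hd_1(D^i V) + i \leqslant d$, so $\hd_1^D(V) - 1 \leqslant d - 1$. For $\td^D(V)$, the $i = 0$ term uses the just-proved (\ref{bound on td}) for $V$, the $i \geqslant 1$ terms use the IH applied to $D^i V$ (whose first parameter is $k - i < k$), and terms with $i > d$ vanish since $D^i V$ is then projective. Assembling (\ref{1st bound}) and (\ref{1st bound vs church-ellenberg bound}) delivers (\ref{church-ellenberg bound}).
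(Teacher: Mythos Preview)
Your proof is correct and follows essentially the same approach as the paper's: induction on $k$, the four-term exact sequence $0 \to KV \to DW \to DP \to DV \to 0$ (which the paper obtains from the long exact sequence of the derived functors of $D$ rather than the snake lemma, but this is the same thing), splitting it into two short exact sequences, and then combining Proposition~\ref{proposition on bound of reg by D} with the induction hypothesis. Your explicit handling of the base case $k=0$, the edge case $d=0$, and the terms with $i>d$ in (\ref{1st bound vs church-ellenberg bound}) is a bit more careful than the paper's presentation, but the core argument is identical.
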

\begin{proof}
If $V=0$, then $\td(V)$, $\reg(V)$, $\hd_1^D(V)$, and $\td^D(V)$ are all equal to $-\infty$, so there is nothing to prove.

Suppose $V\neq 0$. We use induction on $k$. We have a short exact sequence
\begin{equation*}
0 \longrightarrow W \longrightarrow P \longrightarrow V \longrightarrow 0
\end{equation*}
where $P$ is a projective $\FI$-module generated in degree $\leqslant k$ and $W$ is an $\FI$-module generated in degree $\leqslant d$. Since $H_1^D(P)=0$ and $H_1^D(V)=KV$ (see Lemma \ref{lemma basic facts}(ii)), we obtain an exact sequence
\begin{equation*}
0  \longrightarrow KV \longrightarrow DW \longrightarrow DP \longrightarrow DV \longrightarrow 0,
\end{equation*}
which we break up as two short exact sequences:
\begin{gather*}
0 \longrightarrow KV \longrightarrow DW \longrightarrow DW/KV \longrightarrow 0,\\
0 \longrightarrow DW/KV \longrightarrow DP \longrightarrow DV \longrightarrow 0.
\end{gather*}
They give two long exact sequences:
\begin{gather}
\cdots \longrightarrow H_1(DW/KV) \longrightarrow H_0(KV) \longrightarrow H_0(DW) \longrightarrow \cdots, \label{a les 1}\\
\cdots \longrightarrow H_2(DV) \longrightarrow H_1(DW/KV) \longrightarrow 0 \longrightarrow \cdots \label{a les 2},
\end{gather}
where we used Lemma \ref{lemma basic facts}(iii) to see that $H_1(DP)=0$. 

By Lemma \ref{lemma basic facts}(v), the $\FI$-module $DV$ is generated in degree $\leqslant k-1$ and related in degree $\leqslant d-1$. Hence, we have:
\begin{align*}
\td(V) & = \hd_0(KV) & \mbox{(by Lemma \ref{lemma basic facts}(i))} \\
& \leqslant \max\{ \hd_0(DW),\, \hd_1(DW/KV) \}& \mbox{(by (\ref{a les 1}))}\\
& \leqslant \max\{ d-1,\, \hd_2(DV) \}& \mbox{(by Lemma \ref{lemma basic facts}(iv) and (\ref{a les 2}))}\\
& \leqslant \max\{ d-1,\, \reg(DV)+2 \}&\\
& \leqslant \max\{ d-1,\, \min\{k-1, d-1\}+(d-1)-1+2\}& \mbox{(by induction hypothesis)} \\
& \leqslant \min\{k,d\}+d-1. &
\end{align*}
We also have:
\begin{align*}
\reg(V) & \leqslant \max\{ \hd_1(V)-1,\, \td(V),\, \reg(DV)+1 \} & \mbox{(by Proposition \ref{proposition on bound of reg by D})} \\
& \leqslant \max\{ \hd_1^D(V)-1,\, \td^D(V) \} & \mbox{(by induction hypothesis)}.\\
\end{align*}
By Lemma \ref{lemma basic facts}(v) and Lemma \ref{lemma basic facts}(vi), for $i=0,\ldots, k$, we have:
\begin{gather*}
\hd_1(D^i V)+i-1 \leqslant (d-i)+i-1 \leqslant \min\{k,d\}+d-1,\\
\td(D^i V)+i \leqslant \min\{k-i, d-i\}+(d-i)-1 + i \leqslant \min\{k,d\}+d-1,
\end{gather*}
where we used (\ref{bound on td}) for $V$, $DV$, \ldots, and $D^k V$. Hence,
\begin{equation*}
\max\{ \hd_1^D(V)-1,\, \td^D(V) \} \leqslant \min\{k,d\}+d-1.
\end{equation*}
\end{proof}

\subsection{Iterated shifts and vanishing of homology}
Recall that the FI-homology functor $H_a$ is, by definition, the $a$-th left derived functor of $F$. An $\FI$-module $V$ is \emph{$F$-acyclic} if $H_a(V)=0$ for every $a\geqslant 1$.

\begin{lemma} \label{acyclicity from V to SV}
Let $V$ be an $\FI$-module. 
\begin{itemize}
\item[(i)] If $a\in \Z_+$ and $H_a(V)=0$, then $H_a(SV)=0$.

\item[(ii)] If $V$ is $F$-acyclic, then $SV$ is $F$-acyclic.
\end{itemize}
\end{lemma}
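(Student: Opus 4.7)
The plan is to read both parts directly off the long exact sequence furnished by Theorem \ref{main theorem}. For part (i), I would extract from that sequence the three-term fragment
\begin{equation*}
H_a(V) \stackrel{\iota_*}{\longrightarrow} H_a(SV) \longrightarrow SH_a(V).
\end{equation*}
Under the hypothesis $H_a(V)=0$, functoriality of $S$ gives $SH_a(V) = S(0) = 0$, so both terms flanking $H_a(SV)$ in this fragment vanish. Exactness at the middle term then forces $H_a(SV)=0$, as desired.

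Part (ii) will follow immediately from part (i): if $V$ is $F$-acyclic, then $H_a(V)=0$ for every $a\geqslant 1$, so applying (i) separately for each such $a$ yields $H_a(SV)=0$ for every $a\geqslant 1$, which is exactly the $F$-acyclicity of $SV$.

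There is no real obstacle here; the only point worth flagging is that one invokes functoriality (or equivalently exactness) of $S$ to conclude that $SH_a(V)$ vanishes whenever $H_a(V)$ does, so that the right-hand neighbor of $H_a(SV)$ in the exact fragment can be discarded from the argument. Everything else is a one-line consequence of the long exact sequence, which is precisely why Theorem \ref{main theorem} is the right tool.
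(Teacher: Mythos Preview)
Your proof is correct and follows exactly the approach of the paper, whose proof reads simply ``Immediate from Theorem \ref{main theorem}.'' You have merely made explicit the three-term fragment and the (trivial) observation that $S$ of the zero module is zero.
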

\begin{proof}
Immediate from Theorem \ref{main theorem}.
\end{proof}

We say that an $\FI$-module $V$ is \emph{torsion-free} if $\td(V)=-\infty$. By Lemma \ref{lemma basic facts}(i), an $\FI$-module $V$ is torsion-free if and only if $KV=0$.

The following lemma can be deduced from \cite[Theorem 3.5 and Lemma 3.12]{LYu} under some finiteness assumptions. We give a proof using Theorem \ref{main theorem}.

\begin{lemma} \label{DV acyclic implies V acyclic}
Let $V$ be a torsion-free $\FI$-module. If $DV$ is $F$-acyclic, then $V$ is $F$-acyclic.
\end{lemma}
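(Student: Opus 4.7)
My plan is to combine two long exact sequences: the homology long exact sequence coming from the short exact sequence $0 \to V \to SV \to DV \to 0$, and the long exact sequence of Theorem \ref{main theorem}. The torsion-free hypothesis is what puts $V$ into the first short exact sequence in the first place (since torsion-free means $KV=0$, so $\iota:V\to SV$ is injective with cokernel $DV$).

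First I will feed the short exact sequence $0 \to V \to SV \to DV \to 0$ into the homology functor $H_\bullet$. The resulting long exact sequence, combined with the assumption $H_a(DV) = 0$ for all $a\geqslant 1$, forces the map $\iota_*:H_a(V) \to H_a(SV)$ to be an isomorphism for every $a\geqslant 1$ (using $H_a(DV)=0$ and $H_{a+1}(DV)=0$ at the two flanking spots) and to be injective for $a=0$ (using $H_1(DV)=0$).

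Next I will plug these facts into the long exact sequence of Theorem \ref{main theorem}:
\begin{equation*}
\cdots \to SH_{a+1}(V) \to H_a(V) \xrightarrow{\iota_*} H_a(SV) \to SH_a(V) \to H_{a-1}(V) \xrightarrow{\iota_*} H_{a-1}(SV) \to \cdots.
\end{equation*}
Fix $a\geqslant 1$. On the right, $\iota_*:H_{a-1}(V)\to H_{a-1}(SV)$ is injective (by the previous step, regardless of whether $a-1=0$ or $a-1\geqslant 1$), so exactness forces the connecting map $SH_a(V)\to H_{a-1}(V)$ to be zero. On the left, $\iota_*:H_a(V)\to H_a(SV)$ is surjective, so exactness forces $H_a(SV)\to SH_a(V)$ to be zero. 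Combining these two vanishings with exactness at $SH_a(V)$ yields $SH_a(V)=0$ for every $a\geqslant 1$.

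Finally I will unshift. From $SH_a(V)=0$ I get $H_a(V)_{\mathbf{n}}=0$ for every $n\geqslant 1$, and for $n=0$ the Koszul complex is zero in degrees $\leqslant -1$ so $H_a(V)_{\mathbf{0}}=0$ when $a\geqslant 1$ (this is exactly Lemma \ref{lemma on lowest degree} applied with the trivial bound $\low(V)\geqslant 0$). Hence $H_a(V)=0$ for all $a\geqslant 1$, i.e., $V$ is $F$-acyclic. There is no serious obstacle; the only subtlety is being careful at the boundary $a=1$ of the first long exact sequence to confirm that $\iota_*$ on $H_1$ is an isomorphism (which uses both $H_1(DV)=0$ and $H_2(DV)=0$), and at the boundary $a-1=0$ of the second long exact sequence to confirm that injectivity of $\iota_*$ on $H_0(V)$ suffices to kill the connecting map.
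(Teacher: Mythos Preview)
Your proof is correct and follows essentially the same approach as the paper's: both use the short exact sequence $0\to V\to SV\to DV\to 0$ (available since $KV=0$) to deduce that $\iota_*$ is an isomorphism on $H_a$ for $a\geqslant 1$ and injective on $H_0$, then feed this into the long exact sequence of Theorem~\ref{main theorem} to conclude $SH_a(V)=0$ for $a\geqslant 1$, and finally invoke Lemma~\ref{lemma on lowest degree} to handle the degree-$0$ piece. Your write-up is slightly more explicit about the exactness argument at $SH_a(V)$, but the content is identical.
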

\begin{proof}
Since $KV=0$, there is a short exact sequence $0 \to V \stackrel{\iota}{\to} SV \to DV \to 0$. From the long exact sequence in homology and the $F$-acyclicity of $DV$, we deduce that:
\begin{itemize}
\item[(i)] $\iota_*: H_0(V) \longrightarrow H_0(SV)$ is a monomorphism.

\item[(ii)] $\iota_*: H_a(V) \longrightarrow H_a(SV)$ is an isomorphism for each $a\geqslant 1$.
\end{itemize}

Suppose that $a\geqslant 1$. From (i), (ii), and the long exact sequence in Theorem \ref{main theorem}, we must have $SH_a(V)=0$, so $H_a(V)_{\mathbf{n}}=0$ for each $n\geqslant 1$. By Lemma \ref{lemma on lowest degree}, we have $H_a(V)_{\mathbf{0}}=0$. Therefore, $H_a(V)=0$.
\end{proof}

The following result is proved in \cite[Corollary 3.3]{Li3} and \cite[Corollary 4.11]{Ra}; see also \cite[Theorem A]{Na}. We adapt the argument in \cite[Theorem 3.13]{LYu} using (\ref{bound on td}) and Lemma \ref{DV acyclic implies V acyclic}. 

\begin{theorem} \label{acyclic after shifts}
Let $V$ be an $\FI$-module which is generated in degree $\leqslant k$ and related in degree $\leqslant d$ where $k, d\in \Z_+$. Then $S^i V$ is $F$-acyclic for each $i\geqslant \min\{k,d\}+d$. 
\end{theorem}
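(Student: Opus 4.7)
The plan is to proceed by induction on $k$, using Lemma \ref{DV acyclic implies V acyclic} to reduce the $F$-acyclicity of $S^i V$ to that of $D(S^i V)$. By iterating Lemma \ref{lemma basic facts}(vii), $D(S^i V) \cong S^i(D V)$; and by Lemma \ref{lemma basic facts}(v), $DV$ is generated in degree $\leqslant k-1$ and related in degree $\leqslant d-1$, so the inductive hypothesis applied to $DV$ governs $D(S^i V)$.

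Before invoking Lemma \ref{DV acyclic implies V acyclic} one needs $S^i V$ torsion-free for $i \geqslant \min\{k,d\}+d$. Since $\iota: \mathrm{id} \to S$ is a natural transformation, the morphisms $\iota_{SV}$ and $S(\iota_V)$ from $SV$ to $S^2V$ agree: at each set $X$, both are $V$ applied to the same inclusion $\sigma(X) \hookrightarrow \sigma^2(X)$. Because $S$ is exact, this gives $K(SV) = S(KV)$, and iterating, $K(S^i V) = S^i(KV)$. By Lemma \ref{lemma basic facts}(i) and (\ref{bound on td}), $\deg(KV) = \td(V) \leqslant \min\{k,d\}+d-1$, so $S^i(KV) = 0$ and hence $S^i V$ is torsion-free whenever $i \geqslant \min\{k,d\}+d$.

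The induction then runs cleanly. In the base case $k = 0$, Lemma \ref{lemma basic facts}(v) gives $DV = 0$, so $D(S^i V) = 0$ is trivially $F$-acyclic, and Lemma \ref{DV acyclic implies V acyclic} together with the torsion-freeness above yields $S^i V$ $F$-acyclic for $i \geqslant d = \min\{0,d\}+d$. For $k \geqslant 1$ and $d \geqslant 1$, the inductive hypothesis applied to $DV$ makes $S^j(DV) \cong D(S^j V)$ $F$-acyclic for $j \geqslant \min\{k-1,d-1\}+(d-1) = \min\{k,d\}+d-2$, which in particular holds for $j \geqslant \min\{k,d\}+d$; Lemma \ref{DV acyclic implies V acyclic} completes the step. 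The edge case $k \geqslant 1$, $d = 0$ is handled directly: applying the right-exact functor $D$ to a presentation $0 \to W \to P \to V \to 0$ with $\hd_0(W) \leqslant 0$ and using Lemma \ref{lemma basic facts}(iv) gives $DW = 0$, so $DV \cong DP$ is projective by Lemma \ref{lemma basic facts}(iii) and hence $F$-acyclic; since $V$ is torsion-free, Lemma \ref{DV acyclic implies V acyclic} gives $V$ $F$-acyclic, and Lemma \ref{acyclicity from V to SV}(ii) yields $S^i V$ $F$-acyclic for all $i \geqslant 0 = \min\{k,0\}+0$.

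The main obstacle I anticipate is establishing $K(S^i V) = S^i(KV)$, which hinges on the equality $\iota_{SV} = S(\iota_V)$ and requires some careful bookkeeping with the added element $\star$; once this identification is secured, the remainder of the proof is a direct combination of the cited lemmas with the induction.
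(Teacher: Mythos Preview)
Your proof is correct and follows essentially the same approach as the paper: induction on $k$, using the bound (\ref{bound on td}) to get $S^iV$ torsion-free, then Lemma \ref{lemma basic facts}(v) and (vii) together with the induction hypothesis to get $D(S^iV)\cong S^i(DV)$ $F$-acyclic, and finally Lemma \ref{DV acyclic implies V acyclic}. Your explicit verification that $K(S^iV)\cong S^i(KV)$ and your separate treatment of the cases $k=0$ and $d=0$ simply spell out steps the paper leaves terse or implicit.
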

\begin{proof}
The statement is trivial if $V=0$.

Suppose $V\neq 0$. We prove the theorem by induction on $k$. Suppose $i\geqslant \min\{k,d\}+d$. By Theorem \ref{theorem on bound of reg by D}, the $\FI$-module $S^i V$ is torsion-free. Using Lemma \ref{lemma basic facts}(v), one has:
\begin{align*}
& \mbox{ $S^i(DV)$ is $F$-acyclic (by induction hypothesis)} \\
\Longrightarrow & \mbox{ $D(S^i V)$ is $F$-acyclic (by Lemma \ref{lemma basic facts}(vii))} \\
\Longrightarrow & \mbox{ $S^i V$ is $F$-acyclic (by Lemma \ref{DV acyclic implies V acyclic}). } 
\end{align*}
\end{proof}

\begin{notation} \label{NV}
If $V$ is an $\FI$-module which is generated in degree $\leqslant k$ and related in degree $\leqslant d$ where $k, d\in \Z_+$, we denote by $\N(V)$ the minimum $i\in\Z_+$ such that $S^i V$ is $F$-acyclic.
\end{notation}

By Theorem \ref{acyclic after shifts}, one has: 
\begin{equation*}
\N(V)\leqslant \min\{k,d\}+d.
\end{equation*}

The following result is proved in \cite[Theorem 1.3]{LYu} and \cite[Theorem B]{Ra}. We give another proof using Theorem \ref{main theorem}.

\begin{proposition} \label{acyclic if one homology is zero}
Let $V$ be an $\FI$-module which is generated in degree $\leqslant k$ and related in degree $\leqslant d$ where $k, d\in \Z_+$. Then $V$ is $F$-acyclic if and only if there exists $s \geqslant 1$ such that $H_s(V)=0$.
\end{proposition}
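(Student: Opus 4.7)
The forward direction is trivial, so the task is the converse: assuming $H_s(V) = 0$ for some $s \geqslant 1$, deduce that $V$ is $F$-acyclic. My plan is to combine Theorem \ref{acyclic after shifts}, which supplies an integer $N \geqslant 0$ such that $S^N V$ is $F$-acyclic, with the long exact sequence of Theorem \ref{main theorem} to run a downward induction on $i$ from $N$ to $0$, showing at each step that $S^i V$ is $F$-acyclic. The base case $i = N$ is exactly Theorem \ref{acyclic after shifts}, and the case $i = 0$ is the desired conclusion.

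For the inductive step, set $W := S^i V$ and assume $SW = S^{i+1}V$ is $F$-acyclic. Iterated application of Lemma \ref{acyclicity from V to SV}(i) to $H_s(V) = 0$ shows that $H_s(W) = 0$. Since $H_a(SW) = 0$ for every $a \geqslant 1$, two consecutive terms in the long exact sequence of Theorem \ref{main theorem} vanish around each $a \geqslant 1$, and the sequence collapses to an isomorphism
\begin{equation*}
SH_{a+1}(W) \xrightarrow{\cong} H_a(W) \qquad \text{for every } a \geqslant 1.
\end{equation*}
This isomorphism is the main engine of the argument.

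Using $H_s(W) = 0$, I propagate vanishing in both directions along this isomorphism. Going upward, taking $a = s$ yields $SH_{s+1}(W) \cong H_s(W) = 0$, hence $H_{s+1}(W)_{\mathbf{n}} = 0$ for every $n \geqslant 1$; Lemma \ref{lemma on lowest degree} disposes of the $n = 0$ case since $\low(W) + (s+1) \geqslant 1$, so $H_{s+1}(W) = 0$, and an easy upward induction gives $H_a(W) = 0$ for all $a \geqslant s$. Going downward (only needed when $s \geqslant 2$), the case $a = s-1$ reads $SH_s(W) \cong H_{s-1}(W)$; since $H_s(W) = 0$ we get $H_{s-1}(W) = 0$, and downward induction gives $H_a(W) = 0$ for $1 \leqslant a \leqslant s-1$. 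Thus $W = S^i V$ is $F$-acyclic, completing the induction.

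There is no real obstacle beyond organizing the two nested inductions: once Theorem \ref{acyclic after shifts} provides a starting point and Lemma \ref{acyclicity from V to SV}(i) ensures that the single vanishing hypothesis $H_s(V) = 0$ persists under iterated shifts, the collapsing of Theorem \ref{main theorem} to the isomorphism $SH_{a+1}(W) \cong H_a(W)$ mechanically propagates vanishing in all homological degrees $\geqslant 1$.
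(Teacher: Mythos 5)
Your proposal is correct and follows essentially the same route as the paper: the paper also reduces to the case where the shift is $F$-acyclic (via an induction on $\N(V)$, which is just your downward induction read in the other direction), collapses the long exact sequence of Theorem \ref{main theorem} to the isomorphisms $SH_{a+1}\cong H_a$, and propagates the vanishing of $H_s$ both downward and upward, using Lemma \ref{lemma on lowest degree} to kill the low-degree components that the shift isomorphism does not see. The only cosmetic difference is that the paper composes the isomorphisms into $S^{a-s}H_a(V)\cong H_s(V)$ in one stroke rather than stepping up one degree at a time.
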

\begin{proof}
We only have to prove that if $s$ is an integer $\geqslant 1$ such that $H_s(V)=0$, then $V$ is $F$-acyclic. We use induction on $\N(V)$ (see Notation \ref{NV}).

First, if $\N(V)=0$, then $V$ is $F$-acyclic. Next, suppose $\N(V)\geqslant 1$ and $H_s(V)=0$ for some $s\geqslant 1$. By Lemma \ref{acyclicity from V to SV}(i), we have $H_s(SV)=0$. Since $\N(SV)=\N(V)-1$, by induction hypothesis, the $\FI$-module $SV$ is $F$-acyclic. 

Suppose $1\leqslant a \leqslant s$. By Theorem \ref{main theorem}, there are isomorphisms:
\begin{equation*}
H_a(V) \cong SH_{a+1}(V) \cong S^2H_{a+2}(V) \cong \cdots \cong S^{s-a} H_s(V) = 0.
\end{equation*}

Now suppose $a\geqslant s$. By Theorem \ref{main theorem}, there are isomorphisms:
\begin{equation*}
0 = H_s(V) \cong  SH_{s+1}(V) \cong S^2H_{s+2}(V) \cong \cdots \cong S^{a-s} H_a(V),
\end{equation*}
so $H_a(V)_{\mathbf{n}}=0$ for $n\geqslant a-s$. But by Lemma \ref{lemma on lowest degree}, we also have $H_a(V)_{\mathbf{n}}=0$ for $n<a$. Hence, $H_a(V)=0$.

It follows that $V$ is $F$-acyclic.
\end{proof}

A characterization of $F$-acyclicity in terms of existence of a suitable filtration (called $\sharp$-filtration in \cite[Definition 1.10]{Na}) is proved in \cite[Theorem 1.3]{LYu} and \cite[Theorem B]{Ra}; we do not need to use this filtration in our present article.

\subsection{Bounding regularity using the shift functor}
Our second strategy for bounding the Castelnuovo-Mumford regularity $\reg(V)$ of an $\FI$-module $V$ is to find a bound of $\reg(V)$ in terms of $\reg(SV)$, and then use recurrence to obtain a bound for $\reg(V)$. This is similar to the approach used by Li in \cite[Section 4]{Li1}.

\begin{proposition} \label{proposition for bounding reg by S}
Let $V$ be an $\FI$-module. Then
\begin{equation*}
\reg(V) \leqslant \max\{ \hd_1(V)-1,\, \reg(SV)+1 \}.
\end{equation*}
\end{proposition}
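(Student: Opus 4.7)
The plan is to mimic the inductive scheme of Proposition \ref{proposition on bound of reg by D}, but replace the use of the two short exact sequences through $V/KV$ and $DV$ with a single direct appeal to the long exact sequence of Theorem \ref{main theorem}. Set $c := \max\{\hd_1(V)-1,\ \reg(SV)+1\}$; if $c = \infty$ the statement is vacuous, so assume $c < \infty$. I will show by induction on $a \geqslant 1$ that $\hd_a(V) \leqslant c+a$, which by definition gives $\reg(V) \leqslant c$.

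The base case $a=1$ is immediate from $\hd_1(V)-1 \leqslant c$, so no homological input is needed. For the inductive step, suppose $a \geqslant 2$ and $\hd_{a-1}(V) \leqslant c+a-1$, so that $H_{a-1}(V)_{\mathbf{n}} = 0$ for every $n \geqslant c+a$. The inequality $c \geqslant \reg(SV)+1$ gives $\hd_a(SV) \leqslant c+a-1$, hence $H_a(SV)_{\mathbf{n}} = 0$ for every $n \geqslant c+a$ as well.

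Now evaluate the long exact sequence of Theorem \ref{main theorem} at $\mathbf{n}$. Using the identification $(SH_a(V))_{\mathbf{n}} = H_a(V)_{\mathbf{n+1}}$ coming from $(SW)_{\mathbf{n}} = W_{\mathbf{n} \sqcup \{\star\}}$, we obtain an exact sequence
\begin{equation*}
H_a(SV)_{\mathbf{n}} \longrightarrow H_a(V)_{\mathbf{n+1}} \longrightarrow H_{a-1}(V)_{\mathbf{n}}.
\end{equation*}
Both flanking terms vanish for $n \geqslant c+a$, so $H_a(V)_{\mathbf{n+1}} = 0$ for such $n$, i.e.\ $\hd_a(V) \leqslant c+a$, completing the induction.

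I do not expect a real obstacle here: the fact that the shift functor $S$ intertwines so cleanly with the long exact sequence (unlike $D$, which forced the detour through $KV$ and $\td(V)$ in Proposition \ref{proposition on bound of reg by D}) should make this proof strictly shorter than its derivative analogue. The only thing to watch is the index bookkeeping in passing between $SH_a(V)$ and $H_a(V)_{\mathbf{n+1}}$, which is immediate from the definition of the shift functor.
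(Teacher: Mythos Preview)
Your proof is correct and essentially identical to the paper's: same setup with $c=\max\{\hd_1(V)-1,\reg(SV)+1\}$, same induction on $a$, same base case, and the same use of the three-term piece $H_a(SV)_{\mathbf n}\to H_a(V)_{\mathbf{n+1}}\to H_{a-1}(V)_{\mathbf n}$ from Theorem~\ref{main theorem} in the inductive step. The only differences are expository (your explicit remark on $(SH_a(V))_{\mathbf n}=H_a(V)_{\mathbf{n+1}}$ and the comparison with Proposition~\ref{proposition on bound of reg by D}).
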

\begin{proof}
Set $c=\max\{ \hd_1(V)-1,\, \reg(SV)+1 \}$. There is nothing to prove if $c=\infty$, so assume $c<\infty$.

We shall show, by induction on $a$, that one has:
\begin{equation*}
\hd_a(V)\leqslant c+a \quad \mbox{ for each } a\geqslant 1. 
\end{equation*}
When $a=1$, the inequality is immediate from the definition of $c$. 

Assume that one has $\hd_{a-1}(V)\leqslant c+a-1$ for some $a\geqslant 2$. Then $H_{a-1}(V)_{\mathbf{n}}=0$ for $n\geqslant c+a$. By Theorem \ref{main theorem}, we have an exact sequence:
\begin{equation*}
\cdots \longrightarrow H_a(SV)_{\mathbf{n}} \longrightarrow H_a(V)_{\mathbf{n+1}} \longrightarrow H_{a-1}(V)_{\mathbf{n}} \longrightarrow \cdots.
\end{equation*} 
Since $c\geqslant \reg(SV)+1$, we have $H_a(SV)_{\mathbf{n}}=0$ for $n\geqslant c+a$. Therefore $H_a(V)_{\mathbf{n+1}}=0$ for $n\geqslant c+a$, and hence $\hd_a(V)\leqslant c+a$.
\end{proof}

The following result uses Theorem \ref{acyclic after shifts} to insure the existence of $\N(V)$ (see Notation \ref{NV}).

\begin{theorem} \label{theorem bounding reg by S}
Let $V$ be an $\FI$-module which is generated in degree $\leqslant k$ and related in degree $\leqslant d$ where $k, d\in \Z_+$. Let
\begin{equation*}
\hd_1^S(V) := \max\{ \hd_1(S^i V) + i \mid i=0, 1, \ldots, \N(V) \}.
\end{equation*}
Then 
\begin{equation*}
\reg(V) \leqslant \hd_1^S(V)-1.
\end{equation*}
\end{theorem}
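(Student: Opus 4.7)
The plan is to prove the theorem by induction on $\N(V)$, combining Proposition \ref{proposition for bounding reg by S} with a bookkeeping comparison of $\hd_1^S(V)$ and $\hd_1^S(SV)$.

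For the base case $\N(V)=0$, the module $V$ is itself $F$-acyclic, so $\hd_a(V)=-\infty$ for every $a\geqslant 1$, and $\reg(V)=-\infty$, which trivially satisfies the claimed inequality. For the inductive step, assume $\N(V)\geqslant 1$ and that the theorem holds for every $\FI$-module generated and related in finite degree whose $\N$-invariant is strictly smaller. By Lemma \ref{lemma basic facts}(x), $SV$ is also generated in degree $\leqslant k$ and related in degree $\leqslant d$, and by the definition of $\N$ we have $\N(SV)=\N(V)-1$, so the induction hypothesis applies to $SV$.

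The key bookkeeping step is the inequality $\hd_1^S(SV)+1 \leqslant \hd_1^S(V)$. Indeed,
\begin{equation*}
\hd_1^S(SV) = \max\{ \hd_1(S^{i+1}V)+i \mid i=0,1,\ldots,\N(V)-1 \} = \max\{ \hd_1(S^j V)+j-1 \mid j=1,\ldots,\N(V) \},
\end{equation*}
so adding $1$ yields a maximum taken over a subset of the terms defining $\hd_1^S(V)$ (namely those with $i\geqslant 1$). Combined with the induction hypothesis, this gives $\reg(SV)\leqslant \hd_1^S(SV)-1 \leqslant \hd_1^S(V)-2$.

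Finally, Proposition \ref{proposition for bounding reg by S} gives
\begin{equation*}
\reg(V) \leqslant \max\{ \hd_1(V)-1,\, \reg(SV)+1 \} \leqslant \max\{ \hd_1(V)-1,\, \hd_1^S(V)-1 \} = \hd_1^S(V)-1,
\end{equation*}
where the last equality uses that $\hd_1(V)$ is the $i=0$ term in the max defining $\hd_1^S(V)$. There is no serious obstacle here: Theorem \ref{acyclic after shifts} ensures that $\N(V)$ is a finite non-negative integer so that induction on $\N(V)$ is available, and the only small subtlety is checking the index shift that yields $\hd_1^S(SV)+1 \leqslant \hd_1^S(V)$.
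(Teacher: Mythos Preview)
Your proof is correct and follows essentially the same approach as the paper: induction on $\N(V)$, with the inductive step combining Proposition~\ref{proposition for bounding reg by S} applied to $V$ with the induction hypothesis applied to $SV$. You have simply made explicit the bookkeeping inequality $\hd_1^S(SV)+1\leqslant\hd_1^S(V)$ and the fact (via Lemma~\ref{lemma basic facts}(x)) that $SV$ inherits the finite generation and relation bounds, both of which the paper leaves implicit.
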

\begin{proof}
We use induction on $\N(V)$. If $\N(V)=0$, then $\reg(V)=-\infty$, so there is nothing to prove. 

Suppose $\N(V)\geqslant 1$. Since $\N(SV)=\N(V)-1$, by induction hypothesis, we have $\reg(SV)\leqslant \hd_1^S(SV)-1$, and hence by Proposition \ref{proposition for bounding reg by S}, we obtain $\reg(V)\leqslant \hd_1^S(V)-1$.
\end{proof}

In the above theorem, one has $\hd_1(S^i V)<\infty$ for each $i$ by Lemma \ref{lemma basic facts}.

It was proved by Li and Ramos \cite[Theorem 5.20]{LRa} that, for a finitely generated $\FI$-module $V$ over a noetherian ring, one has:
\begin{equation*} 
\reg(V) \leqslant \max \{ \deg(H_{\mathfrak{m}}^j(V)) + j \mid j=0, 1,\ldots  \},
\end{equation*}
where $H_{\mathfrak{m}}^j(V)$ for $j=0,1,\ldots $ are the local cohomology groups of $V$. It would be too much of a digression for us to review the definition and properties of local cohomology groups of $\FI$-modules; we refer the reader to the paper \cite{LRa} of Li and Ramos (see  \cite[Definition 5.13 and Theorem E]{LRa}). Let us show that the bound in Theorem \ref{theorem bounding reg by S} is always less than or equal to their bound.

\begin{proposition}
Suppose that $\kk$ is noetherian and $V$ is a finitely generated $\FI$-module over $\kk$. Then one has:
\begin{equation*}
\hd_1^S(V)-1 \leqslant  \max \{ \deg(H_{\mathfrak{m}}^j(V)) + j \mid j=0, 1,\ldots \}.
\end{equation*}
\end{proposition}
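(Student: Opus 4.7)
My plan is to reduce the desired inequality to the Li--Ramos bound applied term by term to each shifted module $S^i V$. Fix $i \in \{0, 1, \ldots, \N(V)\}$. Since $\hd_1(S^i V) - 1 \leqslant \reg(S^i V)$ by the definition of regularity, and since $S^i V$ remains generated and related in finite degree by Lemma \ref{lemma basic facts}(x) (hence is still a finitely generated $\FI$-module over the noetherian ring $\kk$), the bound \cite[Theorem 5.20]{LRa} applies to $S^i V$ and yields
\begin{equation*}
\hd_1(S^i V) + i - 1 \;\leqslant\; \reg(S^i V) + i \;\leqslant\; \max_{j}\{\deg H_{\mathfrak{m}}^j(S^i V) + j\} + i.
\end{equation*}
Taking the maximum over $i$ on the left, it therefore suffices to prove, for every $i$ and every $j$, the inequality
\begin{equation*}
\deg H_{\mathfrak{m}}^j(S^i V) + i \;\leqslant\; \deg H_{\mathfrak{m}}^j(V).
\end{equation*}

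To this end, I would combine two ingredients. First, the elementary observation that $(SW)_{\mathbf{n}} = W_{\mathbf{n+1}}$ for any $\FI$-module $W$ forces $\deg(SW) \leqslant \deg(W) - 1$ (with the convention $\deg(0) = -\infty$), so by iteration $\deg(S^i W) \leqslant \deg(W) - i$. Second, a natural isomorphism $H_{\mathfrak{m}}^j(SV) \cong S H_{\mathfrak{m}}^j(V)$, iterated in $i$ and plugged into the degree bound just noted (applied to $W = H_{\mathfrak{m}}^j(V)$), delivers the required inequality.

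The main obstacle is the commutation $H_{\mathfrak{m}}^j \circ S \cong S \circ H_{\mathfrak{m}}^j$. At $j = 0$, where $H_{\mathfrak{m}}^0 = \Gamma_{\mathfrak{m}}$ is the maximal torsion subfunctor, this is a direct unwinding of definitions: an element $v \in V_{X \sqcup \{\star\}}$ is torsion when viewed in $(SV)_X$ precisely when it is torsion when viewed in $V_{X \sqcup \{\star\}}$, since both conditions amount to the existence of an injection with source $X \sqcup \{\star\}$ that annihilates $v$. To upgrade this to all $j$, I would appeal to the fact that $S$ is exact and preserves injective $\FI$-modules (the latter because $S$ admits an exact left adjoint, namely the left Kan extension along $\sigma$). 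An injective resolution of $V$ then remains an injective resolution of $SV$ after termwise application of $S$, and the degree-zero identity $S \Gamma_{\mathfrak{m}} = \Gamma_{\mathfrak{m}} S$ propagates to an isomorphism of derived functors, completing the argument after iterating $i$ times.
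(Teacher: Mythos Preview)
Your proposal is correct and follows essentially the same route as the paper: bound $\hd_1(S^iV)+i-1\leqslant\reg(S^iV)+i$, apply \cite[Theorem~5.20]{LRa} to each $S^iV$, and then use the commutation $H_{\mathfrak{m}}^j(SW)\cong S\,H_{\mathfrak{m}}^j(W)$ together with $\deg(SW)\leqslant\deg(W)-1$ to push the bound back to $V$. The only difference is that the paper simply cites \cite[paragraph before Corollary~5.23]{LRa} for the commutation, whereas you supply your own justification via the observation that $S$ is exact, agrees with $\Gamma_{\mathfrak{m}}$ in degree~$0$, and preserves injectives because its left adjoint (left Kan extension along $\sigma$, which at a finite set $Y$ is $\bigoplus_{y\in Y} V_{Y\setminus\{y\}}$) is exact.
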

\begin{proof}
For any finitely generated $\FI$-module $W$, set 
\begin{equation*}
\varpi(W):=  \max \{ \deg(H_{\mathfrak{m}}^j(W)) + j \mid j=0, 1,\ldots. \}.
\end{equation*}
One has $H_{\mathfrak{m}}^j(SW) \cong SH_{\mathfrak{m}}^j(W)$ for each $j\geqslant 0$ (see \cite[paragraph before Corollary 5.23]{LRa}); thus, one has $\varpi(SW) \leqslant \varpi(W)-1$. Hence, for each $i\in \Z_+$, one has:
\begin{equation*}
\hd_1(S^i V) + i - 1 \leqslant \reg(S^i V) + i \leqslant \varpi(S^i V) + i \leqslant \varpi(V),
\end{equation*}
where the second inequality is obtained by applying \cite[Theorem 5.20]{LRa} to $S^i V$.
\end{proof}

\subsection{Aside on generating degree and relation degree}
Let $V$ be an $\FI$-module which is generated in degree $\leqslant k$ and related in degree $\leqslant d$ for some $k, d\in \Z_+$, that is, there is a short exact sequence 
\begin{equation*}
0\longrightarrow W\longrightarrow P\longrightarrow V\longrightarrow 0
\end{equation*}
where $P$ is a projective $\FI$-module generated in degree $\leqslant k$ and $W$ is an $\FI$-module generated in degree $\leqslant d$. It is easy to see that when such a presentation exists, we have $\hd_0(V)\leqslant k$ and $\hd_1(V)\leqslant d$. Moreover, when such a presentation exists, we can find one with $k=\hd_0(V)$. Whence, suppose that $k=\hd_0(V)$; from the long exact sequence in homology, we obtain \cite[Lemma 4.4]{Li1}:
\begin{equation} \label{gd and hd1}
\hd_1(V) \leqslant \hd_0(W) \leqslant \max\{ \hd_0(V),\, \hd_1(V) \}.
\end{equation}
\begin{lemma} \label{presentation with relation degree the first homological degree}
Let $V$ be an $\FI$-module which is generated in degree $\leqslant k$ and related in degree $\leqslant d$ for some $k, d\in \Z_+$. Suppose that $0\to W\to P\to V\to 0$ is a short exact sequence where $P$ is a projective $\FI$-module generated in degree $\leqslant \hd_0(V)$. If $\hd_0(V)\leqslant \hd_1(V)$, then $W$ is generated in degree $\leqslant \hd_1(V)$.
\end{lemma}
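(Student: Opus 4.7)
The plan is to simply apply the long exact sequence in $\FI$-homology to the given short exact sequence and extract a degree bound on $H_0(W)$. This is a one-step argument, essentially a specialization of the inequality~(\ref{gd and hd1}) already quoted in the excerpt.

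Concretely, I would proceed as follows. Since $P$ is projective, one has $H_a(P)=0$ for every $a\geqslant 1$. The long exact sequence associated to $0\to W \to P \to V \to 0$ therefore truncates to an exact sequence
\begin{equation*}
0 \longrightarrow H_1(V) \longrightarrow H_0(W) \longrightarrow H_0(P) \longrightarrow H_0(V) \longrightarrow 0.
\end{equation*}
The exactness at $H_0(W)$ gives
\begin{equation*}
\deg H_0(W) \;\leqslant\; \max\{\deg H_1(V),\, \deg H_0(P)\}.
\end{equation*}
The first term on the right is $\hd_1(V)$ by definition. For the second, the hypothesis that $P$ is generated in degree $\leqslant \hd_0(V)$ means that $\deg H_0(P) \leqslant \hd_0(V)$. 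Invoking the standing assumption $\hd_0(V)\leqslant \hd_1(V)$, both terms are bounded by $\hd_1(V)$, so $\hd_0(W) = \deg H_0(W) \leqslant \hd_1(V)$, which is exactly the claim that $W$ is generated in degree $\leqslant \hd_1(V)$.

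I do not anticipate any serious obstacle: the argument uses only right-exactness of $H_0$, vanishing of higher homology of projectives, and the elementary fact that in a short exact sequence $\deg$ of the middle term is at most the max of $\deg$ of the outer terms. In fact, the full inequality~(\ref{gd and hd1}), namely $\hd_1(V)\leqslant \hd_0(W)\leqslant \max\{\hd_0(V),\hd_1(V)\}$, already implies the lemma instantly once one substitutes the hypothesis $\hd_0(V)\leqslant \hd_1(V)$, so if desired the proof can be written as a single line citing~(\ref{gd and hd1}).
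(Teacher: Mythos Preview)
Your proposal is correct and matches the paper's approach exactly: the paper's proof is the single line ``Immediate from (\ref{gd and hd1}),'' and your argument simply unpacks the long exact sequence derivation of (\ref{gd and hd1}) before observing that it can be cited directly.
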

\begin{proof}
Immediate from (\ref{gd and hd1}).
\end{proof}

The following result of Li and Yu \cite[Corollary 3.4]{LYu} says that, where FI-homology is concerned, one can frequently assume that $\hd_0(V)<\hd_1(V)$ and hence apply Lemma \ref{presentation with relation degree the first homological degree}; see \cite[Remark 2.16]{Ra}. Let us give a proof using Proposition \ref{acyclic if one homology is zero}.

\begin{lemma} \label{reduction lemma of Li-Yu}
Let $V$ be an $\FI$-module which is generated in degree $\leqslant k$ and related in degree $\leqslant d$ for some $k, d\in \Z_+$. Suppose that $V$ is not $F$-acyclic. Let $r=\hd_1(V)$ and let $U$ be the $\FI$-submodule of $V$ generated by $\bigsqcup_{n<r} V_{\mathbf{n}}$. Let $W=V/U$. Then one has the followings:
\begin{itemize}
\item[(i)] $W$ is $F$-acyclic.

\item[(ii)] $H_a(U) \cong H_a(V)$ for each $a\geqslant 1$.

\item[(iii)] $\hd_0(U) < \hd_1(U)$.
\end{itemize}
\end{lemma}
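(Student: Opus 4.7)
The plan is to establish (i) first, and then derive (ii) and (iii) from the long exact sequence in homology associated with
\[
0 \longrightarrow U \longrightarrow V \longrightarrow W \longrightarrow 0.
\]
For (i), the main tool will be Proposition \ref{acyclic if one homology is zero}: it reduces $F$-acyclicity to the vanishing of a single $H_s$ with $s \geq 1$, provided $W$ is generated and related in finite degree.

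First I would record the degree data that drives everything. By construction $\hd_0(U) \leq r-1$, and since $W_{\mathbf{n}} = V_{\mathbf{n}}/U_{\mathbf{n}} = 0$ for $n < r$, one has $\low(W) \geq r$. To legitimize an invocation of Proposition \ref{acyclic if one homology is zero}, I would then verify that $W$ is generated and related in finite degree: the quotient map gives $\hd_0(W) \leq \hd_0(V) \leq k$, and the segment
\[
H_1(V) \longrightarrow H_1(W) \longrightarrow H_0(U) \longrightarrow H_0(V)
\]
of the long exact sequence yields $\hd_1(W) \leq \max\{\hd_1(V),\, \hd_0(U)\} \leq r$.

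The central step would be to prove $H_1(W) = 0$ by a two-sided squeeze on degrees. In low degrees, Lemma \ref{lemma on lowest degree} combined with $\low(W) \geq r$ gives $H_1(W)_{\mathbf{n}} = 0$ whenever $n < \low(W) + 1$, hence for all $n \leq r$. In degrees $n > r$, the same segment of the long exact sequence pinches $H_1(W)_{\mathbf{n}}$ between $H_1(V)_{\mathbf{n}} = 0$ (by the definition $r = \hd_1(V)$) and $H_0(U)_{\mathbf{n}} = 0$ (by $\hd_0(U) < r$), so $H_1(W)_{\mathbf{n}} = 0$ there as well. Proposition \ref{acyclic if one homology is zero} then delivers (i).

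Claim (ii) falls out of the long exact sequence immediately: since $H_a(W) = 0$ for all $a \geq 1$, one reads off $H_a(U) \cong H_a(V)$ for each $a \geq 1$. Finally (iii) is a consequence of (ii), because $\hd_1(U) = \hd_1(V) = r > \hd_0(U)$. The main obstacle is the degree-wise vanishing of $H_1(W)$; the precise point is that the two bounds $\hd_0(U) < r$ and $\low(W) \geq r$ are exactly what is needed to close the squeeze on $H_1(W)_{\mathbf{n}}$ from both sides.
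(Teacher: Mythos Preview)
Your proposal is correct and follows essentially the same route as the paper: bound $\hd_1(W)\leqslant r$ via the long exact sequence, kill $H_1(W)$ in degrees $\leqslant r$ via Lemma~\ref{lemma on lowest degree} and $\low(W)\geqslant r$, then invoke Proposition~\ref{acyclic if one homology is zero} and read off (ii) and (iii). You are in fact slightly more careful than the paper in explicitly checking that $W$ is generated and related in finite degree before applying Proposition~\ref{acyclic if one homology is zero}.
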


\begin{proof}
From the short exact sequence $0\to U\to V\to W\to 0$, we obtain a long exact sequence in homology:
\begin{equation*}
\cdots \longrightarrow H_2(W) \longrightarrow H_1(U) \longrightarrow H_1(V) \longrightarrow H_1(W) \longrightarrow H_0(U) \longrightarrow \cdots.
\end{equation*}
Since $\hd_1(V)=r$ and $\hd_0(U)\leqslant r-1$, we have $\hd_1(W)\leqslant r$. But $\low(W)\geqslant r$, so by Lemma \ref{lemma on lowest degree}, we have $H_1(W)_{\mathbf{n}}=0$ for each $n\leqslant r$. Therefore, we must have $H_1(W)=0$.

By Proposition \ref{acyclic if one homology is zero}, it follows that $W$ is $F$-acyclic. Hence, from the long exact sequence, we see that $H_a(U)\cong H_a(V)$ for each $a\geqslant 1$. In particular, $\hd_1(U)=\hd_1(V)$. Thus, $\hd_0(U)<r=\hd_1(U)$.
\end{proof}

The proof of the above lemma in \cite{LYu} is more elementary than the one we give here. We thought, however, that it might be worthwhile to give a different explanation of why it is true. As observed in \cite{Li3} and \cite{Ra}, one can use Lemma \ref{reduction lemma of Li-Yu} to deduce the following.

\begin{corollary}
Let $V$ be an $\FI$-module which is generated in degree $\leqslant k$ and related in degree $\leqslant d$ for some $k, d\in \Z_+$. Then one has:
\begin{equation*}
\reg(V) \leqslant \min\{ \hd_0(V),\, \hd_1(V) \} + \hd_1(V) -1.
\end{equation*}
If, moreover, $V$ is not $F$-acyclic, then one has:
\begin{equation*}
\N(V) \leqslant \min\{ \hd_0(V),\, \hd_1(V) \} + \hd_1(V).
\end{equation*}
\end{corollary}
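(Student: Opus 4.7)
My plan is to reduce to the hypotheses of Theorem \ref{theorem on bound of reg by D} and Theorem \ref{acyclic after shifts} by means of Lemma \ref{reduction lemma of Li-Yu}. If $V$ is $F$-acyclic then $\hd_a(V)=-\infty$ for every $a\geqslant 1$, so $\reg(V)=-\infty$ and the first inequality is trivial under the usual conventions on $\pm\infty$; the second inequality is not asserted in this case. Hence I may assume from now on that $V$ is not $F$-acyclic.

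Set $r=\hd_1(V)$ and let $U$ be the submodule of $V$ produced by Lemma \ref{reduction lemma of Li-Yu}. Since $H_a(U)\cong H_a(V)$ for every $a\geqslant 1$, I have $\hd_1(U)=r$ and $\reg(U)=\reg(V)$. The key auxiliary fact to record is that $\hd_0(U)\leqslant \min\{\hd_0(V),\hd_1(V)\}$: the construction of $U$ immediately gives $\hd_0(U)\leqslant r-1=\hd_1(V)-1$, and if $\hd_0(V)<r$ then every generator of $V$ already lies in $\bigsqcup_{n<r} V_{\mathbf{n}}$, so $U=V$ and $\hd_0(U)=\hd_0(V)$. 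Because $\hd_0(U)<\hd_1(U)$ by Lemma \ref{reduction lemma of Li-Yu}(iii), Lemma \ref{presentation with relation degree the first homological degree} then furnishes a presentation of $U$ by a projective generated in degree $\leqslant \hd_0(U)$ whose syzygy is generated in degree $\leqslant r$.

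Applying Theorem \ref{theorem on bound of reg by D} to this presentation of $U$ yields $\reg(V)=\reg(U)\leqslant \hd_0(U)+r-1$, and Theorem \ref{acyclic after shifts} yields $\N(U)\leqslant \hd_0(U)+r$; combined with the key inequality above these give the two claimed bounds, provided I know that $\N(V)=\N(U)$. This last identification is the only step I expect to require genuine care. I would deduce it from the short exact sequence $0\to U\to V\to V/U\to 0$ of Lemma \ref{reduction lemma of Li-Yu}, whose quotient $V/U$ is $F$-acyclic: applying the exact functor $S^i$ and invoking Lemma \ref{acyclicity from V to SV}(ii) inductively, $S^i(V/U)$ remains $F$-acyclic for every $i\geqslant 0$, so the long exact sequence in FI-homology forces $H_a(S^iU)\cong H_a(S^iV)$ for all $a\geqslant 1$ and all $i\geqslant 0$; thus $S^iV$ is $F$-acyclic if and only if $S^iU$ is, whence $\N(V)=\N(U)$, completing the argument.
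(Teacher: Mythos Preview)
Your argument is correct and follows essentially the same route as the paper: reduce to $U$ via Lemma \ref{reduction lemma of Li-Yu}, use Lemma \ref{presentation with relation degree the first homological degree} to present $U$ with generation degree $\hd_0(U)$ and relation degree $\hd_1(U)=r$, then apply Theorem \ref{theorem on bound of reg by D} and Theorem \ref{acyclic after shifts} to $U$, and finally transfer back to $V$ using $\reg(U)=\reg(V)$ and $\N(U)=\N(V)$ (the latter via Lemma \ref{acyclicity from V to SV}(ii) applied to $S^i(V/U)$). Your explicit verification that $\hd_0(U)\leqslant\min\{\hd_0(V),\hd_1(V)\}$ is a detail the paper leaves implicit but is exactly what is needed.
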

\begin{proof}
We may assume that $V$ is not $F$-acyclic. Let $U$ be the $\FI$-submodule of $V$ defined in Lemma \ref{reduction lemma of Li-Yu} and let $W=V/U$. 

By Lemma \ref{presentation with relation degree the first homological degree}, the $\FI$-module $U$ is generated in degree $\leqslant \hd_0(U)$ and related in degree $\leqslant \hd_1(U)$. We have:
\begin{multline*}
\reg(V) = \reg(U) \leqslant \min\{ \hd_0(U),\, \hd_1(U) \} + \hd_1(U) -1 \\ \leqslant \min\{ \hd_0(V),\, \hd_1(V) \} + \hd_1(V) -1,
\end{multline*}
where the first inequality comes from applying Theorem \ref{theorem on bound of reg by D} to $U$. 

Since $W$ is $F$-acyclic, it follows by Lemma \ref{acyclicity from V to SV} that $S^iW$ is $F$-acyclic for each $i\geqslant 0$. From the long exact sequence in homology associated to the short exact sequence $0\to S^iU\to S^iV\to S^iW\to 0$, we deduce that:
\begin{equation*} 
\N(V) = \N(U)\leqslant \min\{ \hd_0(U),\, \hd_1(U) \} + \hd_1(U) \leqslant \min\{ \hd_0(V),\, \hd_1(V) \} + \hd_1(V),
\end{equation*}
where the first inequality comes from applying Theorem \ref{acyclic after shifts} to $U$.
\end{proof}

\subsection{Homological degrees are strictly increasing}
Besides Theorem \ref{main theorem}, the proof of the following result also uses Theorem \ref{acyclic after shifts} to insure the existence of $\N(V)$ (see Notation \ref{NV}).

\begin{theorem} \label{monotonicity theorem}
Let $V$ be an $\FI$-module which is generated in degree $\leqslant k$ and related in degree $\leqslant d$ for some $k, d\in \Z_+$. If $V$ is not $F$-acyclic, then one has:
\begin{equation*}
\hd_1(V) < \hd_2(V) < \hd_3(V) < \cdots.
\end{equation*}
\end{theorem}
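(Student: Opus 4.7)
The plan is to induct on $\N(V)$, whose existence is guaranteed by Theorem \ref{acyclic after shifts}, playing off Theorem \ref{main theorem} against itself: the long exact sequence lets us compare the homologies of $V$ and $SV$, while Proposition \ref{acyclic if one homology is zero} guarantees that all $H_a(V)$ with $a\geqslant 1$ are nonzero (and Theorem \ref{theorem on bound of reg by D} guarantees $\hd_a(V)<\infty$). Since $V$ is not $F$-acyclic, $\N(V)\geqslant 1$.

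For the base case $\N(V)=1$, the FI-module $SV$ is $F$-acyclic, so $H_a(SV)=0$ for every $a\geqslant 1$. Plugging this into the long exact sequence of Theorem \ref{main theorem} collapses it to isomorphisms $H_a(V)\cong SH_{a+1}(V)$ for all $a\geqslant 1$. Taking degrees and using that $S$ decreases degree by one (and that $H_{a+1}(V)\neq 0$ by Proposition \ref{acyclic if one homology is zero}) yields $\hd_a(V)=\hd_{a+1}(V)-1$, which is exactly strict monotonicity.

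For the inductive step, $\N(V)\geqslant 2$ so $SV$ is again not $F$-acyclic, is generated and related in finite degree by Lemma \ref{lemma basic facts}(x), and satisfies $\N(SV)=\N(V)-1$; thus the inductive hypothesis gives $\hd_1(SV)<\hd_2(SV)<\cdots$. Suppose for contradiction that $\hd_{a+1}(V)\leqslant \hd_a(V)=:m$ for some $a\geqslant 1$. Reading Theorem \ref{main theorem} at degree $\mathbf{m}$, the term $SH_{a+1}(V)_{\mathbf{m}}=H_{a+1}(V)_{\mathbf{m+1}}$ vanishes, so the map $H_a(V)_{\mathbf{m}}\to H_a(SV)_{\mathbf{m}}$ is injective; since $H_a(V)_{\mathbf{m}}\neq 0$, we conclude $\hd_a(SV)\geqslant m$. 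By the inductive hypothesis, $\hd_{a+1}(SV)>\hd_a(SV)\geqslant m$; set $n:=\hd_{a+1}(SV)>m$. Reading Theorem \ref{main theorem} at degree $\mathbf{n}$ now gives the three-term exact piece
\begin{equation*}
H_{a+1}(V)_{\mathbf{n}}\longrightarrow H_{a+1}(SV)_{\mathbf{n}}\longrightarrow SH_{a+1}(V)_{\mathbf{n}}=H_{a+1}(V)_{\mathbf{n+1}}.
\end{equation*}
Since $n>m\geqslant \hd_{a+1}(V)$, both outer terms vanish, forcing $H_{a+1}(SV)_{\mathbf{n}}=0$, which contradicts $n=\hd_{a+1}(SV)$.

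The main obstacle to watch out for is justifying that $SV$ is again not $F$-acyclic in the inductive step (otherwise the inductive hypothesis does not apply) and keeping track of which direction of the long exact sequence to read at each stage; once one realises that the sequence has to be used twice, once to force $\hd_a(SV)$ to be at least as large as $\hd_a(V)$ and then again to turn this into a contradiction via the inductive monotonicity for $SV$, the argument is a direct bookkeeping exercise.
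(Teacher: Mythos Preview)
Your proof is correct and follows essentially the same approach as the paper's: induction on $\N(V)$, with the base case $\N(V)=1$ handled by collapsing the long exact sequence to $H_a(V)\cong SH_{a+1}(V)$, and the inductive step handled by the same two readings of Theorem \ref{main theorem} (first at the top degree of $H_a(V)$ to push the contradiction to $SV$, then at the top degree of $H_{a+1}(SV)$ to close it). Your version is in fact slightly more careful in making explicit that $SV$ is again generated and related in finite degree (Lemma \ref{lemma basic facts}(x)) and not $F$-acyclic, which the paper leaves implicit.
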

\begin{proof}
We use induction on $\N(V)$. Since $V$ is not $F$-acyclic, one has $\N(V)>0$. Moreover, by Proposition \ref{acyclic if one homology is zero}, one has $H_a(V)\neq 0$ for each $a\geqslant 1$. By Theorem \ref{theorem on bound of reg by D}, one has $\hd_a(V)<\infty$ for each $a\geqslant 1$.

Suppose first that $\N(V)=1$. Then $SV$ is $F$-acyclic. From Theorem \ref{main theorem}, one has
\begin{equation*}
 SH_{a+1}(V) \cong H_a(V) \quad \mbox{ for each } a\geqslant 1.
\end{equation*}
This implies $\hd_{a+1}(V) = \hd_a(V)+1$ for each $a\geqslant 1$.

Next, suppose that $\N(V)>1$. Let $a\geqslant 1$ and let $n=\hd_a(V)$. We need to show that $\hd_{a+1}(V)\geqslant n+1$.

Suppose, on the contrary, that $\hd_{a+1}(V)\leqslant n$. Then $H_{a+1}(V)_{\mathbf{n+1}}=0$. From Theorem \ref{main theorem}, we have an exact sequence:
\begin{equation*}
\cdots \longrightarrow H_{a+1}(V)_{\mathbf{n+1}} \longrightarrow H_a(V)_{\mathbf{n}} \longrightarrow H_a(SV)_{\mathbf{n}} \longrightarrow \cdots.
\end{equation*}
Since $H_a(V)_{\mathbf{n}}\neq 0$, it follows that $H_a(SV)_{\mathbf{n}}\neq 0$, and so $\hd_a(SV)\geqslant n$. Since $\N(SV)=\N(V)-1$, by induction hypothesis, one has $\hd_{a+1}(SV)\geqslant n+1$. Thus, there exists $r\geqslant n+1$ such that $H_{a+1}(SV)_{\mathbf{r}}\neq 0$. But from Theorem \ref{main theorem}, we have an exact sequence:
\begin{equation*}
\cdots \longrightarrow H_{a+1}(V)_{\mathbf{r}} \longrightarrow H_{a+1}(SV)_{\mathbf{r}} \longrightarrow H_{a+1}(V)_{\mathbf{r+1}} \longrightarrow \cdots.
\end{equation*}
Since $r>\hd_{a+1}(V)$, we have $H_{a+1}(V)_{\mathbf{r}}=0$ and $H_{a+1}(V)_{\mathbf{r+1}}=0$, so $H_{a+1}(SV)_{\mathbf{r}}=0$, a contradiction. We conclude that $\hd_{a+1}(V) \geqslant n+1$.
\end{proof}

The following corollary uses Theorem \ref{theorem on bound of reg by D} to see that $\reg(V)<\infty$.

\begin{corollary} 
Let $V$ be an $\FI$-module which is generated in degree $\leqslant k$ and related in degree $\leqslant d$ for some $k, d\in \Z_+$. If $V$ is not $F$-acyclic, then there exists $s\geqslant 1$ such that  
\begin{equation*}
\hd_a(V)=\reg(V)+a \quad \mbox{ for each } a \geqslant s.
\end{equation*}
\end{corollary}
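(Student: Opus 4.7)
The plan is to combine the monotonicity from Theorem \ref{monotonicity theorem} with the finiteness of regularity supplied by Theorem \ref{theorem on bound of reg by D}. First, since $V$ is not $F$-acyclic, Proposition \ref{acyclic if one homology is zero} guarantees that $H_a(V)\neq 0$ for every $a\geqslant 1$, so each $\hd_a(V)$ is a genuine non-negative integer rather than $-\infty$. I would then reformulate Theorem \ref{monotonicity theorem} as the statement that the integer sequence
\begin{equation*}
b_a := \hd_a(V)-a \qquad (a\geqslant 1)
\end{equation*}
is non-decreasing, which is just the rewriting $\hd_{a+1}(V)\geqslant \hd_a(V)+1 \Longleftrightarrow b_{a+1}\geqslant b_a$.

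Next, I would invoke Theorem \ref{theorem on bound of reg by D} to conclude that $\reg(V)<\infty$. Unpacking the definition of Castelnuovo-Mumford regularity gives $\reg(V)=\sup_{a\geqslant 1} b_a$, and in particular $b_a\leqslant \reg(V)$ for every $a\geqslant 1$. Thus $(b_a)$ is a non-decreasing, integer-valued sequence bounded above by a finite integer, so it must eventually stabilize: there exists $s\geqslant 1$ with $b_a = b_s$ for all $a\geqslant s$, and this common value is the supremum $\reg(V)$. Substituting back yields $\hd_a(V)=\reg(V)+a$ for every $a\geqslant s$.

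There is no real obstacle here; the corollary is essentially a repackaging of the two preceding theorems together with the definition of $\reg(V)$. The only point requiring any care is the identification $\reg(V)=\sup_{a\geqslant 1}(\hd_a(V)-a)$, which follows immediately from $\reg(V)=\inf\{c\in\Z\mid \hd_a(V)\leqslant c+a\text{ for all }a\geqslant 1\}$, together with the observation that this supremum is actually attained because an integer-valued non-decreasing sequence bounded above attains its supremum.
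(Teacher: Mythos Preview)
Your proof is correct and follows exactly the same approach as the paper: rewrite Theorem \ref{monotonicity theorem} as the non-decreasing property of the sequence $\hd_a(V)-a$, use Theorem \ref{theorem on bound of reg by D} to bound it above by the finite integer $\reg(V)$, and conclude that the sequence stabilizes at its supremum. The paper's version is terser but the substance is identical.
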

\begin{proof}
By Theorem \ref{monotonicity theorem}, we have: 
\begin{equation*}
\hd_1(V)-1\leqslant \hd_2(V)-2 \leqslant \hd_3(V)-3 \leqslant \cdots.
\end{equation*}
But by Theorem \ref{theorem on bound of reg by D}, we have $\reg(V)<\infty$. The claim is now immediate from the definition of $\reg(V)$.
\end{proof}

\end{document}